\newtheorem{theorem}{Theorem}[section]
\newtheorem{proposition}[theorem]{Proposition}
\newtheorem{lemma}[theorem]{Lemma}
\newtheorem{corollary}[theorem]{Corollary}
\theoremstyle{definition}
\newtheorem{definition}[theorem]{Definition}
\newtheorem{example}[theorem]{Example}
\newtheorem{conjecture}[theorem]{Conjecture}
\newtheorem{question}[theorem]{Question}
\theoremstyle{remark}
\newtheorem{remark}[theorem]{Remark}
\numberwithin{equation}{section}
\newcommand{\Q}{\mathbb{Q}}
\newcommand{\R}{\mathbb{R}}
\newcommand{\C}{\mathbb{C}}
\newcommand{\cat}{\mathsf{cat}}
\newcommand{\secat}{\mathsf{secat}}
\newcommand{\TC}{\mathsf{TC}}
\newcommand{\MTC}{\mathsf{MTC}}
\newcommand{\HTC}{\mathsf{HTC}}
\newcommand{\nil}{\mathsf{nil}}
\newcommand{\cupl}{\mathsf{cup}}
\newcommand{\zcl}{\mathsf{zcl}}
\title{Rational sequential parametrized topological complexity}
\author[Y. Minowa]{Yuki Minowa}
\address{Department of Mathematics, Kyoto University, Kyoto, 606-8502, Japan}
\email{minowa.yuki.48z@st.kyoto-u.ac.jp}
\date{March 3, 2025}
\subjclass[2010]{55M30, 55P62}
\keywords{parametrized topological complexity, rational homotopy theory, sectional category}
\begin{document}

\begin{abstract}
  Sequential parametrized topological complexity is a numerical homotopy invariant of a fibration, which arose in the robot motion planning problem with external constraints. In this paper, we study sequential parametrized topological complexity in view of rational homotopy theory. We generalize results on topological complexity, and in particular, give an explicit algebraic upper bound for sequential parametrized topological complexity when a fibration admits a certain decomposition, which is a generalization of the result of Hamoun, Rami and Vandembroucq on topological complexity.
\end{abstract}

\maketitle


\section{Introduction}\label{Introduction}

\emph{Topological complexity} is a numerical homotopy invariant introduced by Farber \cite{F} to describe the instability in the motion planning problem. Since its introduction, topological complexity has been studied intensely in several contexts, and has been generalized mainly in several directions. Here, we recall three of those generalizations. Rudyak \cite{R} considered the robot motion planning passing through several intermediate points, where topological complexity only considers initial and terminal points. Then he introduced \emph{sequential (or higher) topological complexity}. On the other hand, Cohen, Farber and Weinberger \cite{CFW} considered the robot motion planning having external constraints expressed by a fibration, and introduced \emph{parametrized topological complexity} of a fibration with path-connected fiber. Recently, Farber and Paul \cite{FP} combined the above two generalizations of topological complexity, and introduced \emph{sequential parametrized topological complexity}.

We recall the definition of sequential parametrized topological complexity. Let $p\colon X\to B$ be a fibration with a path-connected fiber.
Let $X^r_B$ denote the fiberwise product of $r$ copies of $X$ over $B$, and let
\[
\mathcal{P}_B X = \{\gamma \colon [0, 1] \to X \mid p \circ \gamma \text{ is a constant map} \}.
\]
be the fiberwise path space. Then the evauation map
\[
\Pi_r\colon \mathcal{P}_BX \to X^r_B,\quad\gamma \mapsto \left(\gamma(0),\gamma(\tfrac{1}{r-1}),\cdots,\gamma(1) \right)
\]
is a fibration. For $r\ge 2$, \emph{the $r$-th sequential parametrized topological complexity} of a fibration $p\colon X\to B$, denoted by $\TC_r[X\to B]$, is defined to be the minimal integer $k$ such that $X^r_B$ is covered by $k+1$ open sets having local section of $\Pi_r$. If such an integer does not exist, then we set $\TC_r[X\to B]=\infty$. Observe that an element of $\mathcal{P}_B X$ may be thought as a motion of a robot in a space $X$ with constraints given by a fibration $p\colon X\to B$, so a section of $\Pi_r$ may be a motion planning with prescribed points in motions under external constraints given by a fibration $p\colon X\to B$. Thus sequential parametrized topological complexity describes the instability in the robot motion planning problem with external constraints, as mentioned above.

In this paper, we study sequential parametrized topological complexity from rational homotopy theory viewpoint. 
As well as topological complexity, there are inequalities
\begin{equation}
  \label{PTC and zcl}
  \zcl_r[X\to B]\le \TC_r[X\to B]\le \cat_B(X^r_B)
\end{equation}
\noindent
where $\zcl_r[X\to B]$ is the cohomological invariant called the $r$-th zero-divisors cup-length and $\cat_B(X^r_B)$ denotes the fiberwise LS-category over $B$. If $r=2$ and $B$ is a point, then these inequalities are well-known ones for topological complexity. In this case, Jessup, Murillo and Parent \cite{JMP} proved that the left inequality is actually an equality whenever $X$ is a formal rational space. Our first purpose is to provide a sufficient condition for the rational sequential parametrized topological complexity to be determined by the rational cohomology.

\begin{theorem}
  \label{PTC nil ker}
  Let $F\to X\to B$ be a fibration of rational nilpotent spaces of finite rational type which is totally noncohomologous to zero. If $F$ is elliptic and formal and $B$ is formal, then
  \[
  \TC_r[X\to B] = \zcl_r[X\to B].
  \]
\end{theorem}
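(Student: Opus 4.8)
Since the inequality $\zcl_r[X\to B]\le\TC_r[X\to B]$ is already contained in (\ref{PTC and zcl}), the whole content of the theorem is the reverse inequality $\TC_r[X\to B]\le\zcl_r[X\to B]$. The plan is to interpose a rational, algebraically computable invariant between $\zcl_r$ and $\TC_r$, built from a relative Sullivan model of $\Pi_r$, and then to use the formality and ellipticity hypotheses to collapse it onto $\zcl_r$. First I would replace $\Pi_r\colon\mathcal P_BX\to X_B^r$ by the $r$-fold fiberwise diagonal $\Delta_r^B\colon X\to X_B^r$, to which it is fiberwise homotopy equivalent (the inclusion of constant paths $X\hookrightarrow\mathcal P_BX$ is a fiberwise deformation retraction and carries $\Pi_r$ to $\Delta_r^B$), so that $\TC_r[X\to B]=\secat(\Delta_r^B)$; note that the homotopy fiber of $\Delta_r^B$ is $(\Omega F)^{r-1}$.

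Next I would set up Sullivan models. Since $B$ is formal, fix a formal Sullivan model $(\Lambda W,d)$ of $B$, quasi-isomorphic to $(H^*(B;\Q),0)$. Let $(\Lambda W\otimes\Lambda V,D)$ be the relative Sullivan model of $p\colon X\to B$, with fiber model $(\Lambda V,\bar D)$ a Sullivan model of $F$; ellipticity of $F$ makes $V$ finite-dimensional, and formality of $F$ makes $(\Lambda V,\bar D)$ a formal cdga. The hypothesis that $F\to X\to B$ is totally noncohomologous to zero translates into the statement that $H(\Lambda W\otimes\Lambda V,D)$ is a free $H(\Lambda W,d)\cong H^*(B;\Q)$-module, isomorphic as such to $H^*(B;\Q)\otimes H^*(F;\Q)$; the same then holds for the $r$-fold fiberwise product, whose model is the $r$-fold relative tensor product $(\Lambda W\otimes(\Lambda V)^{\otimes r},D_r)$ over $(\Lambda W,d)$. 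Under these identifications $\Delta_r^B$ is modeled by the $(\Lambda W,d)$-linear multiplication $m_r\colon\Lambda W\otimes(\Lambda V)^{\otimes r}\to\Lambda W\otimes\Lambda V$, and a relative Sullivan model of $m_r$ is obtained by adjoining $r-1$ copies of a desuspension of $V$, modeling the loop-space factors of the fiber of $\Pi_r$.

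I would then bound $\TC_r[X\to B]=\secat(\Delta_r^B)$ above by the module sectional category $\msecat(m_r)$, equivalently by the module sequential parametrized topological complexity $\MTC_r[X\to B]$ computed from the relative model above, and reduce the theorem to the equality $\MTC_r[X\to B]=\zcl_r[X\to B]$; since $\zcl_r\le\msecat$ always, only $\MTC_r[X\to B]\le\zcl_r[X\to B]$ remains. For this I would argue, as in the theorem of Jessup--Murillo--Parent but over the base ring $H^*(B;\Q)$, that the combination of the three formality-type hypotheses lets one replace $m_r$ up to quasi-isomorphism by a \emph{formal} $H^*(B;\Q)$-linear surjection of cdga's with zero differential, namely the relative multiplication $H^*(B;\Q)\otimes H^*(F;\Q)^{\otimes_{\Q} r}\to H^*(B;\Q)\otimes H^*(F;\Q)$ --- this is where formality of $B$ and of $F$, and the collapse of the Serre spectral sequence coming from the TNCZ hypothesis, are used, and where ellipticity of $F$ supplies the finiteness needed to identify the module sectional category of this formal model with $\secat(\Delta_r^B)$. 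For such a formal model, a module-theoretic obstruction argument shows that the module sectional category equals the nilpotency index of the kernel of the multiplication, which is exactly $\zcl_r[X\to B]$. Combining the steps gives $\TC_r[X\to B]\le\MTC_r[X\to B]=\zcl_r[X\to B]\le\TC_r[X\to B]$, hence equality.

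The principal obstacle is the third step: one must genuinely produce a formal model of the fiberwise multiplication $m_r$ out of the \emph{separate} formality of base and fiber together with the TNCZ condition --- in general $X$ itself need not be formal, so this cannot be read off from a formal model of $X$ alone --- and then carry the Jessup--Murillo--Parent type computation of sectional category through the relative setting over $H^*(B;\Q)$, keeping careful track of the fact that the relevant fiber is $(\Omega F)^{r-1}$ rather than $F^{\,r-1}$. The ellipticity of $F$ is the ingredient that makes the module sectional category of the formal model coincide with the geometric $\secat(\Delta_r^B)$ and that keeps all the modules involved of finite type throughout the obstruction argument.
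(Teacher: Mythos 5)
There is a genuine gap. Your overall frame (replace $\Pi_r$ by the fiberwise diagonal $\Delta_r$, model it by the fiberwise multiplication $id\otimes\mu_r$ over a formal model of $B$, and collapse everything onto a zero-differential model where sectional category equals the nilpotency of the kernel) is the same as the paper's, but the decisive step --- producing such a collapse, i.e.\ proving that the \emph{map} $\Delta_r\colon X\to X^r_B$ is formal --- is exactly what you label ``the principal obstacle'' and then do not carry out. This is not a technical footnote: in general $X$ and $X^r_B$ need not be formal, and the conclusion genuinely fails when any hypothesis is dropped (Examples \ref{ex nonformal}, \ref{ex hyperbolic}, \ref{ex not TNCZ}), so no soft quasi-isomorphism-juggling can supply it. The paper's proof does the real work here by adapting Lupton's bigraded-model/relative-filtered-model argument: it builds compatible relative filtered models $(H(B)\otimes(\Lambda V)^{\otimes r},D)\to(H(B)\otimes\Lambda V,D')$ for $F^r\to X^r_B\to B$ and $F\to X\to B$ (Lemma \ref{filtered}, using TNCZ to get the Serre spectral sequence collapse and compatibility of the basis replacements, and using ellipticity and formality of $F$ and formality of $B$ to get $H_+=0$), and then passes to $H_0$ by dividing out $\delta(V_1)$, which exhibits $\Delta_r$ as a formal map; Lemma \ref{secat formal} (Carrasquel-Vera) then gives $\TC_r[X\to B]=\nil\,\mathrm{Ker}\,H^*(\Delta_r)=\zcl_r[X\to B]$ at once. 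Your sketch gestures at ``arguing as in Jessup--Murillo--Parent over $H^*(B;\Q)$'' but supplies no mechanism for this relative formality, so the proposal remains a plan rather than a proof.

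A secondary problem is your intermediate step $\TC_r[X\to B]\le\MTC_r[X\to B]$: the general inequality runs the other way, $\zcl_r\le\HTC_r\le\MTC_r\le\TC_r$ as in \eqref{PTC ineqs}, because a differential-module retraction of $\rho_k$ need not be a cdga retraction. In the formal situation all these invariants do coincide, but that coincidence is itself a consequence of the formality of $\Delta_r$; so inserting $\MTC_r$ as an upper bound is either circular (it presupposes the very collapse you have not proved) or superfluous (once formality of the map is known, Lemma \ref{secat formal} finishes the argument directly, with no module-theoretic obstruction theory needed). Likewise, ellipticity of $F$ is not used for ``finiteness of modules in an obstruction argument'' but enters through Lupton's construction of the relative filtered model, where it is needed to kill $H_+$.
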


LS category is generally quite hard to compute, and so is fiberwise LS category. Then the upper bound \eqref{PTC and zcl} is not practical. Even worse, it is not sharp, in general. However, in the rational category, Hamoun, Rami and Vandembroucq \cite{HRV} gave an easier and sharper upper bound for topological complexity under some conditions. Our second aim is to generalize this result to sequential parametrized topological complexity. Let $F \to X\to B$ be a fibration of rational spaces. We say that this fibration admits \emph{an odd-degree extension} if there is a homotopy commutative diagram
\[
\xymatrix{
F \ar[r]\ar[d]& X \ar[r]\ar[d]& B\ar@{=}[d] \\
\widehat{F} \ar[r]& \widehat{X} \ar[r]& B
}
\]
such that the bottom row is a fibration of rational spaces and the kernel of the map $\pi_\ast(F)\to\pi_\ast(\widehat{F})$ is finite dimensional and concentrated in odd degrees.

\begin{theorem}
  \label{main 1}
  Let $F \to X\to B$ be a fibration of rational nilpotent spaces of finite rational type which is an odd-degree extension of $\widehat{F}\to\widehat{X}\to B$. Then 
  \[
  \TC_r[X\to B] \le \TC_r[\widehat{X}\to B] + m(r-1).
  \]
  where $m$ is the dimension of the kernel of $\pi_\ast(F)\to\pi_\ast(\widehat{F})$.
\end{theorem}
\noindent
In particular, we determine $\TC_r[X\to B]$ for a rational fibration $F \to X\to B$ such that $F$ is elliptic and concentrated in odd degrees, i.e., $\pi_\mathrm{even}(F)=0$.

\begin{theorem}
  \label{main2}
  Let $F \to X\to B$ be a fibration of rational nilpotent spaces of finite rational type. Suppose that $F$ is elliptic and concentrated in odd degrees. Then 
  \[
  \TC_r[X\to B] = \TC_r(F) = (r-1)\dim(\pi_\mathrm{odd}(F)).
  \]
\end{theorem}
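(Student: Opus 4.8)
The plan is to combine three inequalities: the upper bound $\TC_r[X\to B]\le(r-1)\dim\pi_{\mathrm{odd}}(F)$ coming from Theorem~\ref{main 1}, the comparison $\TC_r(F)\le\TC_r[X\to B]$ coming from restriction to a fibre, and the lower bound $\TC_r(F)\ge(r-1)\dim\pi_{\mathrm{odd}}(F)$ coming from the rational Lusternik--Schnirelmann category of a power of $F$. Note first that ellipticity makes $\pi_\ast(F)\otimes\Q$ finite dimensional, and $\pi_{\mathrm{even}}(F)\otimes\Q=0$ gives $m:=\dim\pi_{\mathrm{odd}}(F)=\dim\pi_\ast(F)\otimes\Q<\infty$, so all the quantities in sight are finite.

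For the upper bound, I would apply Theorem~\ref{main 1} to the odd-degree extension with $\widehat F$ a point, $\widehat X=B$, and $\widehat X\to B$ the identity:
\[
\xymatrix{
F \ar[r]\ar[d]& X \ar[r]^{p}\ar[d]^{p}& B\ar@{=}[d]\\
\ast \ar[r]& B \ar@{=}[r]& B
}
\]
This square commutes strictly, its bottom row is a fibration of rational spaces with one-point fibre, and the kernel of $\pi_\ast(F)\to\pi_\ast(\ast)=0$ is all of $\pi_\ast(F)$, which is finite dimensional and concentrated in odd degrees by hypothesis; so $F\to X\to B$ is an odd-degree extension of $\ast\to B\to B$ with $m=\dim\pi_{\mathrm{odd}}(F)$. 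Theorem~\ref{main 1} then yields $\TC_r[X\to B]\le\TC_r[B\to B]+m(r-1)$, and $\TC_r[B\to B]=0$ because for $\mathrm{id}_B$ one has $\mathcal{P}_BB\cong B\cong B^r_B$ with $\Pi_r$ a homeomorphism. Hence $\TC_r[X\to B]\le m(r-1)$.

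For the remaining two inequalities, fix $b\in B$ and restrict $\Pi_r$ over the fibre $F^r\subseteq X^r_B$ of $X^r_B\to B$. Since $p\circ\gamma$ is constant for every $\gamma\in\mathcal{P}_BX$, the preimage $\Pi_r^{-1}(F^r)$ is precisely the free path space of $F$, and the restricted map is the evaluation fibration $\gamma\mapsto(\gamma(0),\gamma(\tfrac{1}{r-1}),\dots,\gamma(1))$ whose sectional category is $\TC_r(F)$ (the case $B=\{b\}$ of the construction recalled above); since sectional category does not increase under restriction of the base, $\TC_r(F)\le\TC_r[X\to B]$. On the other hand $\TC_r(F)\ge\cat(F^{r-1})$ by the standard lower bound for sequential topological complexity, and $F^{r-1}$ is again rational, elliptic and concentrated in odd degrees, so by the F\'elix--Halperin computation of the rational category of elliptic spaces $\cat(F^{r-1})=\cat_0(F^{r-1})=(r-1)\dim\pi_{\mathrm{odd}}(F)$. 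Putting the pieces together,
\[
(r-1)\dim\pi_{\mathrm{odd}}(F)=\cat(F^{r-1})\le\TC_r(F)\le\TC_r[X\to B]\le m(r-1),
\]
and since $m=\dim\pi_{\mathrm{odd}}(F)$ all of these are equalities, which is exactly the statement.

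I expect the main obstacle to be the identity $\cat_0(F^{r-1})=(r-1)\dim\pi_{\mathrm{odd}}(F)$. It is tempting to argue that an elliptic space with odd rational homotopy is rationally a product of odd spheres, but this is \emph{false}: a minimal Sullivan model $(\Lambda V,d)$ with $V$ concentrated in odd degrees can have $d\ne0$ and need not even be formal, so in particular Theorem~\ref{PTC nil ker} is not available here. One must instead invoke the genuine structure theory of elliptic models; the inequality $\cat_0(Y)\le\dim V$ is automatic (when $V$ is odd, $\Lambda^{>\dim V}V=0$), while the reverse inequality is precisely what the F\'elix--Halperin analysis provides --- for $d=0$ it is a cup-length computation in a product of odd spheres, but not in general. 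The remaining steps --- checking the hypotheses of Theorem~\ref{main 1}, the computation $\TC_r[B\to B]=0$, and the identification of the restricted fibration with the one defining $\TC_r(F)$ --- are routine.
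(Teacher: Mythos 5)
Your proof is correct and takes essentially the same route as the paper: the upper bound via Theorem \ref{main 1} applied to the odd-degree extension with $\widehat{F}=\ast$ and $\widehat{X}=B$ (the paper encodes exactly this through the model diagram $\mathcal{M}\hookrightarrow\mathcal{M}\otimes\Q$ versus $\mathcal{M}\hookrightarrow\mathcal{M}\otimes\Lambda V$, with $\TC_r$ of the identity fibration equal to $0$), and the lower bound by restricting $\Pi_r$ over a fibre as in Lemma \ref{bounds 1}. The only cosmetic difference is that the paper quotes Lemma \ref{TC odd fiber} (Kishimoto--Yamaguchi) for $\TC_r(F)=(r-1)\dim\pi_{\mathrm{odd}}(F)$, whereas you rederive it from $\TC_r(F)\ge\cat(F^{r-1})$ together with the F\'elix--Halperin computation $\cat_0=\dim V$ for elliptic spaces with odd-degree homotopy, which is the same underlying input.
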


Farber and Opera \cite[Section 8]{FO} introduced a formal power series which describes the growth of the sequential topological complexity of a space. They asked for which spaces the power series converges to a certain type of rational functions, which is still open. Our third purpose is to consider an analogue of this power series. For a fibration $p\colon X\to B$, we define
\[
\mathcal{F}_{p}(z) := \sum_{r=1}^{\infty}\TC_{r+1}[p\colon X\to B]z^{r}.
\]
This power series illustrates the growth of $\TC_{r}[p\colon X\to B]$. The question of Farber and Opera is generalized as follows.

\begin{question}
  \label{PTC growth}
  For what fibration $F\to X\xrightarrow{p} B$ of finite rational type, is $\mathcal{F}_{p}(z)$ a rational function
  \[
  \frac{P(z)}{(1-z)^2}
  \]
  such that $P(z)$ is a polynomial with $P(1)= \cat(F)$?
\end{question}
\noindent
Combining the above results, we will provide two classes of fibrations which support Question \ref{PTC growth}.

This paper is structured as follows. Section \ref{Secat} recalls the definition of sectional category. Section \ref{Rational} provides the algebraic description of rational sequential parametrized topological complexity. Section \ref{formal} investigates the formality of fibrations and proves Theorem \ref{PTC nil ker}. Section \ref{Bound} computes the algebraic upper bound of $\TC_r[X\to B]$ in relation to odd-degree extension, including the proof of Theorem \ref{main 1}. Section \ref{Odd fiber} proves Theorem \ref{main2}. Finally, Section \ref{PTC function} provides some affirmative cases for Question \ref{PTC growth}.

\subsection*{Acknowledgement}
The author is grateful to Daisuke Kishimoto for his valuable advice. The author was supported by JST SPRING, Grant Number JPMJSP2110.


\section{Sectional category}\label{Secat}

Recall that the sectional category of a map $f\colon X\to Y$, denoted by  $\mathsf{secat}(f)$, is the minimal integer $k$ such that $Y$ is covered by $k+1$ open sets over each of which $f$ has a right homotopy inverse. If such an integer does not exist, then we set $\mathsf{secat}(f)=\infty$. We say that two maps $f_1\colon X_1\to Y_1$ and $f_2\colon X_2\to Y_2$ are equivalent if there is a homotopy commutative diagram
\[
  \xymatrix{
    X_1\ar[r]^{f_1}\ar[d]_\simeq&Y_1\ar[d]^\simeq\\
    X_2\ar[r]^{f_2}&Y_2
  }
\]
where the columns are homotopy equivalences. In this case, $\mathsf{secat}(f_1)=\mathsf{secat}(f_2)$.

By the definition of the sequential parametrized topological complexity, there is an identity
\[
\TC_r[X\to B]=\mathsf{secat}(\Pi_r\colon X^I_B \to X^r_B).
\]
Then since $\Pi_r$ is equivalent to the fiberwise diagonal map $\Delta_r\colon X\to X^r_B$ \cite[Proposition 7.3]{CFW}, we get
\begin{equation}
  \label{PTC Delta}
  \TC_r[X\to B]=\mathsf{secat}(\Delta_r\colon X\to X^r_B).
\end{equation}

\begin{lemma}
  \label{bounds 1}
  There is an inequarity
  \[
  \TC_{r}(F) \le \mathsf{secat}(\Pi_r).
  \]
\end{lemma}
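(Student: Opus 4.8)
The plan is to realize the fibration $\Pi_r^F\colon\mathcal{P}F\to F^r$ computing $\TC_r(F)$ as the pullback of $\Pi_r\colon\mathcal{P}_BX\to X^r_B$ along the inclusion of a single fiber of $p$, and then to invoke the standard fact that sectional category does not increase under pullback. Concretely, I would fix a basepoint $b\in B$, set $F=p^{-1}(b)$, and consider the inclusion $\iota\colon F^r\hookrightarrow X^r_B$ of the fiber of the projection $X^r_B\to B$ over $b$.

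The first step is to identify $\Pi_r^{-1}(\iota(F^r))$. If $\gamma\in\mathcal{P}_BX$ satisfies $\Pi_r(\gamma)\in\iota(F^r)$, then in particular $\gamma(0)\in F$, so $p(\gamma(0))=b$; since $p\circ\gamma$ is constant, $\gamma$ lies entirely in $F$. Conversely any path in $F$ lies in $\mathcal{P}_BX$ and is carried by $\Pi_r$ into $\iota(F^r)$. Hence $\Pi_r^{-1}(\iota(F^r))=\{\gamma\colon[0,1]\to F\}=\mathcal{P}F$, and the restriction of $\Pi_r$ to this set is exactly the evaluation map $\Pi_r^F\colon\mathcal{P}F\to F^r$, $\gamma\mapsto(\gamma(0),\gamma(\tfrac1{r-1}),\dots,\gamma(1))$, whose sectional category is $\TC_r(F)$ by definition. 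In other words the square
\[
\xymatrix{
\mathcal{P}F\ar[r]\ar[d]_{\Pi_r^F}&\mathcal{P}_BX\ar[d]^{\Pi_r}\\
F^r\ar[r]^{\iota}&X^r_B
}
\]
is a pullback.

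Finally I would recall that $\mathsf{secat}$ is monotone under pullback: given an open cover $\{U_0,\dots,U_k\}$ of $X^r_B$ on each piece of which $\Pi_r$ has a (homotopy) section, the cover $\{\iota^{-1}(U_0),\dots,\iota^{-1}(U_k)\}$ of $F^r$ has the property that the pulled-back map $\Pi_r^F$ has a (homotopy) section on each piece, whence $\mathsf{secat}(\Pi_r^F)\le\mathsf{secat}(\Pi_r)$. This yields $\TC_r(F)=\mathsf{secat}(\Pi_r^F)\le\mathsf{secat}(\Pi_r)$, as claimed. There is no serious obstacle in this argument; the only point needing a word of care is that the sectional category of a fibration may be computed equivalently with genuine local sections or with local homotopy sections, which is standard and makes the two descriptions of $\TC_r(F)$ and of $\mathsf{secat}(\Pi_r)$ agree.
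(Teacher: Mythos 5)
Your argument is correct and is exactly the paper's proof: the paper cites the same pullback square (the fiberwise path fibration over $F^r$ is the pullback of $\Pi_r$ along $F^r\hookrightarrow X^r_B$) and invokes monotonicity of sectional category under pullback, which you have simply spelled out in detail. No issues.
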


\begin{proof}
  The inequality follows immediately from the pullback diagram
  \[
  \xymatrix{
  F^I \ar[r] \ar[d]_{\Pi_r} & X^I_B \ar[d]^{\Pi_r} \\
  F^{r} \ar[r] & X^r_B.
  }
  \]
\end{proof}


\section{Rational model}\label{Rational}

In this section, we construct a rational model for sequential parametrized topological complexity. In what follows, we only consider rational spaces  which are path-connected, nilpotent and of finite rational type, unless otherwise specified.

\begin{remark}
  While some authors in the references worked only with simply connected spaces, one can verify that their arguments may be applied to nilpotent spaces of finite rational type. For more about the rational homotopy theory of nilpotent spaces, see \cite{BG, BK, FHT2} for instance.
\end{remark}

We first recall the result of Carrasquel-Vera \cite{CV17} on the algebraic description of the rational sectional category. Let $f\colon X\to Y$ be a map of rational spaces which admits a homotopy retraction $r\colon Y\to X$. Then $f$ has a model $\varphi\colon A\otimes \Lambda W \to A$ which is a retraction of a relative Sullivan algebra $A \to A\otimes \Lambda W$ modelling the map $r$.

\begin{theorem}
  \label{secat FH}
  \cite[Theorem 1]{CV17}
  The sectional category of $f$ is equal to the minimal integer $k$ such that the cdga projection
  \[
  \rho_k\colon A\otimes \Lambda W \twoheadrightarrow \frac{A\otimes \Lambda W}{(\mathrm{Ker}\, \varphi)^{k+1}}
  \]
  admits a homotopy retraction.
\end{theorem}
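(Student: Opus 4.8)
The plan is to run the standard machine for rational sectional category: reduce, via Schwarz's genus theorem, to a section problem for the Ganea--Schwarz fibration, pass to cdga models, and then compare the resulting retraction condition with the one for $\rho_k$. First I would replace $f$ by a fibration, which changes neither $\secat(f)$ nor, up to quasi-isomorphism, the model $\varphi$; then Schwarz's theorem gives that $\secat(f)\le k$ if and only if the $(k+1)$-fold fiberwise join $j_{k+1}\colon \ast^{k+1}_Y X\to Y$ (the $k$-th stage of the Ganea--Schwarz tower of $f$) admits a section. Since $X$ and $Y$ are rational, nilpotent and of finite type, so is $\ast^{k+1}_Y X$, and a section exists if and only if the cdga model $j_{k+1}^{*}$ of $j_{k+1}$ admits a cdga homotopy retraction (rational obstruction theory). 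So the theorem reduces to the equality of the two thresholds $\min\{k : j_{k+1}^{*}\text{ retracts}\}$ and $\min\{k : \rho_k\text{ retracts}\}$.

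For $\min\{k:\rho_k\text{ retracts}\}\le\secat(f)$ --- the easy half --- I would construct a cdga map $\alpha_k\colon (A\otimes\Lambda W)/(\ker\varphi)^{k+1}\to M$, where $M$ is a model of $\ast^{k+1}_Y X$ realized as the codomain of a relative Sullivan model of $j_{k+1}$ over $A\otimes\Lambda W$, such that $\alpha_k\circ\rho_k\simeq j_{k+1}^{*}$. This encodes the fact that $j_{k+1}^{*}$ annihilates the $(k+1)$-st power of the kernel ideal --- the same phenomenon behind the cohomological bound $\secat(f)\ge\nil\,\ker\bigl(H^{*}(Y)\to H^{*}(X)\bigr)$ --- but needed at cochain level, so one must choose $M$ compatibly, building it from the iterated fiberwise join and checking that the word-length filtration in $W$ is carried to the join filtration. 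Granting $\alpha_k$: if $\secat(f)\le k$, a section $\sigma$ of $j_{k+1}$ yields a cdga retraction $\sigma^{*}$ of $j_{k+1}^{*}$, whence $\sigma^{*}\circ\alpha_k$ is a cdga homotopy retraction of $\rho_k$. This is where the hypothesis that $f$ has a homotopy retraction is used: it makes $\varphi$ a split surjection, so that $A\otimes\Lambda W$ is $A$-semifree and $\ker\varphi$ is the ideal generated by $W$, which keeps the bookkeeping manageable.

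The reverse inequality $\secat(f)\le\min\{k:\rho_k\text{ retracts}\}$ is the hard half, and I expect it to be the main obstacle. One cannot merely invert $\alpha_k$, since $\alpha_k$ is far from a quasi-isomorphism --- already when $f$ is a fiberwise diagonal and the fiber is even-dimensional, $(A\otimes\Lambda W)/(\ker\varphi)^{k+1}$ has strictly larger cohomology than $\ast^{k+1}_Y X$. Instead, following the familiar pattern for rational Lusternik--Schnirelmann category (the coincidence of $\cat_{0}$ with its module-theoretic counterpart), I would construct the desired section of $j_{k+1}$ by obstruction theory along the Ganea--Schwarz tower, equivalently along a Postnikov filtration of its fiber: using the relative Sullivan structure of $A\to A\otimes\Lambda W$ to organize the successive lifts, one arranges that at each stage the given cdga homotopy retraction of $\rho_k$ supplies exactly the data needed to kill the next obstruction and extend the partial section. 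Proving that a cdga (hence, in the end, $A\otimes\Lambda W$-module) homotopy retraction of $\rho_k$ really does control all of these obstructions is where essentially the whole technical weight of the argument lies. Combining the two inequalities yields $\secat(f)=\min\{k:\rho_k\text{ admits a homotopy retraction}\}$, and upward closure of the right-hand condition in $k$ is then inherited from the geometric side.
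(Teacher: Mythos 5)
First, note that the paper itself offers no proof of this statement: it is quoted from Carrasquel-Vera \cite{CV17}, so your proposal can only be measured against that published argument. Your overall architecture is the right frame --- Schwarz's fiberwise-join criterion, passage to Sullivan models, comparison of the join stage with the quotient by $(\mathrm{Ker}\,\varphi)^{k+1}$, and the observation that the homotopy-retraction hypothesis on $f$ is what produces the surjective model $\varphi$ --- and your assessment of which inequality is easy and which is hard is also correct. But the proposal has a genuine gap exactly at the step you yourself flag: the implication that a cdga homotopy retraction of $\rho_k$ yields a section of $j_{k+1}$. Announcing ``obstruction theory along the Ganea--Schwarz tower'' with the retraction ``supplying the data needed to kill the next obstruction'' is not an argument: you give no mechanism converting a retraction (a map out of the quotient) into lifts against the stages of the tower, and no reason the induction closes up (higher obstructions, indeterminacy of the choices made at earlier stages). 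In the actual proof this direction is not handled by a Postnikov-style obstruction calculus at all: Carrasquel-Vera works with explicit semi-trivial models of the iterated fiberwise join, in the spirit of Fern\'andez Su\'arez--Ghienne--Kahl--Vandembroucq \cite{FGKV} and of his earlier computations \cite{CV15}, and uses the lifting lemma for surjective quasi-isomorphisms of relative Sullivan algebras; the entire content is an algebraic comparison, over $A\otimes\Lambda W$, between the join model and the quotient $(A\otimes\Lambda W)/(\mathrm{Ker}\,\varphi)^{k+1}$, in which the splitting of $\varphi$ is what allows a homotopy retraction of $\rho_k$ to be transferred to one of the join projection. None of that transfer argument is present in your sketch, and it is where the theorem lives.

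Two smaller points. In the ``easy'' half, a homotopy factorization of $j_{k+1}^{*}$ through the surjection $\rho_k$ is not formal: quotient cdgas are not cofibrant and one cannot factor through a surjection up to homotopy for free, so you must choose the model $M$ of the join in which the ideal $(\mathrm{Ker}\,\varphi)^{k+1}$ is annihilated strictly --- which is precisely what the semi-trivial join models of \cite{FGKV} provide, i.e.\ this step also needs the machinery you deferred. Finally, the bookkeeping claim that $\mathrm{Ker}\,\varphi$ ``is the ideal generated by $W$'' is incorrect as stated: it is the ideal generated by the elements $w-\varphi(w)$, $w\in W$, since $\varphi$ need not vanish on $W$. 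In short: right skeleton, but both the construction of the comparison maps and the hard transfer of the retraction --- the actual substance of Carrasquel-Vera's theorem --- are missing.
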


We then apply Theorem \ref{secat FH} to the sequential parametrized topological complexity by constructing a surjective model for the map $\Pi_r\colon \mathcal{P}_BX \to X^r_B$. Let $F\to X\xrightarrow{p} B$ be a rational fibration with a relative Sullivan model
\[
C \hookrightarrow C\otimes \Lambda V \to \Lambda V.
\]
By \eqref{PTC Delta}, it is sufficient to give a surjective model of the fiberwise diagonal $\Delta_r\colon X\to X^r_B$, of which the first projection $p_1\colon X^r_B\to X$ is a retraction. Then, by arguing as in \cite{CV17}, we obtain a model of $\Delta_r$ by
\begin{equation}
  \label{diag model}
  id\otimes\mu_r\colon C\otimes (\Lambda V)^{\otimes r}\to C\otimes \Lambda V
\end{equation}
where $\mu_r$ denotes the multiplication.

\begin{proposition}
  \label{PTC quot}
  The $r$-th sequential parametrized topological complexity $\TC_r[X\to B]$ is the minimal integer $k$ such that the cdga projection
  \[
  \rho_k\colon C\otimes (\Lambda V)^{\otimes r} \twoheadrightarrow \frac{C\otimes (\Lambda V)^{\otimes r}}{(\mathrm{Ker}\, (id\otimes\mu_r))^{k+1}}
  \]
  admits a homotopy retraction.
\end{proposition}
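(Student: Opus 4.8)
The plan is to deduce Proposition~\ref{PTC quot} by applying Theorem~\ref{secat FH} to the map $\Delta_r\colon X\to X^r_B$, using the identity \eqref{PTC Delta} which already reduces $\TC_r[X\to B]$ to $\secat(\Delta_r)$. So the real content is to check that the cdga map \eqref{diag model}, namely $id\otimes\mu_r\colon C\otimes(\Lambda V)^{\otimes r}\to C\otimes\Lambda V$, is a legitimate input to Theorem~\ref{secat FH}: that is, it must be a retraction of a relative Sullivan algebra modelling a homotopy retraction of $\Delta_r$, with the retraction here being the first projection $p_1\colon X^r_B\to X$.

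First I would set up the Sullivan models carefully. Starting from the chosen relative Sullivan model $C\hookrightarrow C\otimes\Lambda V\to\Lambda V$ of the fibration $F\to X\xrightarrow{p}B$, I would build a model of $X^r_B$ as the $r$-fold relative tensor product over $C$, i.e.\ $C\otimes(\Lambda V)^{\otimes r}$, with $C$ included diagonally; this is standard since the fiberwise product corresponds to the pushout/pullback of cdga's and $C\hookrightarrow C\otimes\Lambda V$ is a (relative Sullivan, hence cofibrant) model. I would then observe that $C\hookrightarrow C\otimes(\Lambda V)^{\otimes r}$ is again a relative Sullivan algebra (a tensor product of relative Sullivan algebras over the common base is relative Sullivan), and that each of the $r$ projections $p_i\colon X^r_B\to X$ is modelled by the inclusion $C\otimes\Lambda V\hookrightarrow C\otimes(\Lambda V)^{\otimes r}$ into the $i$-th factor. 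In particular $p_1$ is a retraction of the relative Sullivan algebra $C\otimes\Lambda V\hookrightarrow C\otimes(\Lambda V)^{\otimes r}$ (thinking of the second algebra as $(C\otimes\Lambda V)\otimes_C(\Lambda V)^{\otimes(r-1)}$, a relative Sullivan algebra over $A:=C\otimes\Lambda V$).

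Next I would identify the model of $\Delta_r$. Since $\Delta_r$ is a section of $p_1$ (indeed of every $p_i$), and $p_1$ is modelled by the inclusion $A=C\otimes\Lambda V\hookrightarrow A\otimes_C(\Lambda V)^{\otimes(r-1)}=C\otimes(\Lambda V)^{\otimes r}$, Theorem~\ref{secat FH} applies with this relative Sullivan algebra and requires a cdga map $C\otimes(\Lambda V)^{\otimes r}\to C\otimes\Lambda V$ modelling $\Delta_r$ and restricting to the identity on $A$. The map $id\otimes\mu_r$ manifestly restricts to the identity on the first $C\otimes\Lambda V$ factor, and one checks, exactly as in \cite{CV17}, that it is a cdga map modelling $\Delta_r$: the $r$ projections $X\xrightarrow{\Delta_r}X^r_B\xrightarrow{p_i}X$ are all the identity, which corresponds on models to the composite $C\otimes\Lambda V\hookrightarrow C\otimes(\Lambda V)^{\otimes r}\xrightarrow{id\otimes\mu_r}C\otimes\Lambda V$ being the identity on each factor, and these $r$ conditions determine the map as $id\otimes\mu_r$. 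With this in place, Theorem~\ref{secat FH} gives that $\secat(\Delta_r)$, hence $\TC_r[X\to B]$ by \eqref{PTC Delta}, equals the minimal $k$ for which $\rho_k\colon C\otimes(\Lambda V)^{\otimes r}\twoheadrightarrow C\otimes(\Lambda V)^{\otimes r}/(\mathrm{Ker}(id\otimes\mu_r))^{k+1}$ admits a homotopy retraction, which is the claim.

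The main obstacle, and the step deserving genuine care, is the verification that $C\otimes(\Lambda V)^{\otimes r}$ with its diagonal $C$-inclusion really is a relative Sullivan model of $X^r_B$ and that $id\otimes\mu_r$ is a model of $\Delta_r$ over $A=C\otimes\Lambda V$ — i.e.\ that the hypotheses of Theorem~\ref{secat FH} are met. This is where the nilpotence and finite rational type assumptions (carried throughout this section) are used to guarantee the tensor product over $C$ computes the homotopy pullback and that the relevant Sullivan models behave well; the rest is the formal transcription ``$\secat$ of a retraction $=$ minimal $k$ with $\rho_k$ split'' supplied verbatim by Theorem~\ref{secat FH}. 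Everything else is bookkeeping, essentially identical to the $B=\mathrm{pt}$, $r=2$ case treated in \cite{CV17} and to the parametrized $r=2$ case, and I would simply indicate that ``arguing as in \cite{CV17}'' carries over.
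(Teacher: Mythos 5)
Your proposal is correct and follows essentially the same route as the paper: both reduce to $\secat(\Delta_r)$ via \eqref{PTC Delta}, note that $id\otimes\mu_r$ is a retraction of the relative Sullivan algebra $id\otimes i_1\colon C\otimes\Lambda V\hookrightarrow C\otimes(\Lambda V)^{\otimes r}$ modelling $p_1$ (with the identification of $C\otimes(\Lambda V)^{\otimes r}$ as a model of $X^r_B$ carried out ``as in \cite{CV17}''), and then apply Theorem~\ref{secat FH}. Your write-up merely spells out these verifications in more detail than the paper's two-line proof.
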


\begin{proof}
  Let $i_1\colon \Lambda V\to (\Lambda V)^{\otimes r}$ denote the first inclusion. The above model $id\otimes\mu_r$ of $\Delta_r$ is obviously a retraction of $id\otimes i_1\colon C\otimes \Lambda V \to C\otimes (\Lambda V)^{\otimes r}$, which is a model of the first projection $p_1$. Thus we can apply Theorem \ref{secat FH}, finishing the proof.
\end{proof}
\noindent
Conversely, one can define the sequential parametrized topological complexity of relative Sullivan algebras.

\begin{definition}
  For an arbitrary relative Sullivan algebra $C\hookrightarrow C\otimes \Lambda V$, we define $\TC_r[C\hookrightarrow C\otimes \Lambda V]$ by the minimal integer $k$ such that the cdga projection
  \[
  \rho_k\colon C\otimes (\Lambda {V})^{\otimes r} \twoheadrightarrow \frac{C\otimes (\Lambda {V})^{\otimes r}}{(\mathrm{Ker}\, (id\otimes\mu_r))^{k+1}}
  \]
  admits a homotopy retraction.
\end{definition}

We next consider some lower bounds of $\TC_r[X\to B]$, which are special cases of the lower bounds of the rational sectional category originally introduced by Fern\'{a}ndez Su\'{a}rez, Ghienne, Kahl and Vandembroucq \cite{FGKV}.

\begin{definition}
  \label{MTC and HTC}
  Let $\rho_k$ as in Proposition \ref{PTC quot}.
  \begin{enumerate}
    \item The module $r$-th sequential parametrized topological complexity of $X\to B$, denoted by $\MTC_r[X\to B]$, is the minimal integer $k$ such that admits a differential $C\otimes (\Lambda V)^{\otimes r}$-module homotopy retraction.
    \item The homology $r$-th sequential parametrized topological complexity of $X\to B$, denoted by $\HTC_r[X\to B]$, is the minimal integer $k$ such that $H(\rho_k)$ is injective.
    \item The $r$-th zero-divisor cup-length, denoted by $\zcl_r[X\to B]$, is the minimal integer $k$ such that
    \[
      \left( \mathrm{Ker}\, (\Delta_r^*\colon H^*(X^r_B;\Q)\to H^*(X;\Q)) \right) ^{k+1} = 0.
    \]
  \end{enumerate}
\end{definition}
\noindent
Then one obtains the following inequalities.
\begin{equation}
  \label{PTC ineqs}
  \zcl_r[X\to B]\le \HTC_r[X\to B]\le \MTC_r[X\to B] \le \TC_r[X\to B].
\end{equation}

\section{On the formality of fibrations}\label{formal}

In this section, we give a proof of Theorem \ref{PTC nil ker}. In what follows, we only consider the cohomology of rational coefficients unless otherwise specified. 

We first recall the formality of maps. A map $f\colon X\to Y$ between rational spaces is said to be \emph{formal} if there exists a homomorphism $\Lambda W\to \Lambda V$ and a homotopy commutative diagram
\[
\xymatrix{
A_{PL}(Y) \ar[d]_-{A_{PL}(f)}& \Lambda W \ar[l]_-{\simeq}\ar[r]^{\simeq}\ar[d]& H^*(Y) \ar[d]^{H^*(f)}\\
A_{PL}(X) & \Lambda V \ar[l]_-{\simeq}\ar[r]^{\simeq}& H^*(X).
}
\]
\noindent
For a ring $R$, let $\nil \, I$ denote the nilpotency of an augmented ideal $I$ of $R$; namely, $\nil \, I$ is the minimal integer $k$ such that $I^{k+1}=0$ in $R$. Then we have the following lemma.

\begin{lemma}
  \label{secat formal}
  \cite[Theorem 24]{CV15}
  If $f$ is formal, then $\mathsf{nil}\, \mathrm{Ker}\, H^*(f) = \secat(f)$.
\end{lemma}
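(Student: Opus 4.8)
The final statement to prove is Lemma~\ref{secat formal}: if $f$ is a formal map, then $\nil\,\mathrm{Ker}\,H^*(f) = \secat(f)$. Since this is attributed to Carrasquel-Vera \cite[Theorem 24]{CV15}, my proof plan will reconstruct the natural argument that uses the machinery already assembled in the excerpt, principally Theorem~\ref{secat FH} (the Carrasquel-Vera algebraic description of rational sectional category via homotopy retractions of the projections $\rho_k$). The overall strategy is: reduce to showing that for a formal map, the algebraic sectional category computed by $\rho_k$ coincides with a purely cohomological quantity, namely the nilpotency of the kernel ideal in cohomology. The inequality $\nil\,\mathrm{Ker}\,H^*(f)\le\secat(f)$ is the easy, general half (it is the cohomological lower bound, of which $\zcl$ in Definition~\ref{MTC and HTC} is the parametrized incarnation), so the real content is the reverse inequality $\secat(f)\le\nil\,\mathrm{Ker}\,H^*(f)$, and this is where formality must be used.

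\textbf{The easy inequality.} First I would record that for \emph{any} map $f\colon X\to Y$ admitting a homotopy retraction, with model $\varphi\colon A\otimes\Lambda W\to A$ as in Theorem~\ref{secat FH}, the projection $\rho_k$ induces in cohomology the projection $H^*(A\otimes\Lambda W)\to H^*(A\otimes\Lambda W)/J$ where $J$ contains (the image of) $(\mathrm{Ker}\,\varphi)^{k+1}$; one checks that $H^*$ of the kernel ideal surjects onto a power of $\mathrm{Ker}\,H^*(f)$, so if $\rho_k$ admits a homotopy retraction then $H(\rho_k)$ is injective, forcing $(\mathrm{Ker}\,H^*(f))^{k+1}=0$. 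Hence $\nil\,\mathrm{Ker}\,H^*(f)\le\secat(f)$ with no formality hypothesis. This is a short diagram-chase and I would not belabor it.

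\textbf{The hard inequality via formality.} Now suppose $f$ is formal, with the homotopy commutative diagram relating $A_{PL}(f)$ to $H^*(f)$ through a common model $\Lambda W\to\Lambda V$. The point is that formality lets me replace the generic relative Sullivan model and its retraction $\varphi$ by one built directly out of the cohomology algebra map $H^*(f)\colon H^*(Y)\to H^*(X)$: concretely, take $A=H^*(X)$ (or a Sullivan model thereof) and a relative Sullivan model $A\hookrightarrow A\otimes\Lambda W\xrightarrow{\varphi}A$ of the retraction, arranged so that the quasi-isomorphism class of $\varphi$ realizes $H^*(f)$ after passing to cohomology — and crucially so that $\mathrm{Ker}\,\varphi$ maps onto $\mathrm{Ker}\,H^*(f)$ compatibly with the ideal filtration. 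Let $m=\nil\,\mathrm{Ker}\,H^*(f)$. Then in the quotient $(A\otimes\Lambda W)/(\mathrm{Ker}\,\varphi)^{m+1}$ the relevant ideal already vanishes at the level that obstructs a retraction, and one produces an explicit homotopy retraction of $\rho_m$ by lifting the identity of $H^*(X)$ through the formality quasi-isomorphisms; this shows $\secat(f)\le m$ by Theorem~\ref{secat FH}. The main obstacle — the step I expect to require the most care — is exactly this construction of the homotopy retraction of $\rho_m$: one must verify that the formality data is compatible with the ideal powers $(\mathrm{Ker}\,\varphi)^{k+1}$ (so that the filtration on the Sullivan model matches the filtration by powers of $\mathrm{Ker}\,H^*(f)$ on cohomology), and that the retraction can be chosen as a cdga map up to homotopy rather than merely a chain map. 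I would handle this by choosing the model of $f$ to be a \emph{formal} relative model — i.e.\ taking $A\otimes\Lambda W$ to be a Sullivan model of the mapping-cylinder-type construction on $H^*(f)$ — so that $\varphi$ and its kernel are literally algebraic shadows of $H^*(f)$ and $\mathrm{Ker}\,H^*(f)$, after which the retraction of $\rho_m$ is essentially the one induced by the identity on $H^*(X)$. Finally, combining the two inequalities yields $\secat(f)=\nil\,\mathrm{Ker}\,H^*(f)$, as claimed; I would close by remarking that this is consistent with, and in the non-parametrized case recovers, the equality $\zcl_r = \TC_r$ appearing in the discussion after \eqref{PTC and zcl}.
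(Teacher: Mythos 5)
The paper does not prove this lemma at all: it is quoted verbatim from Carrasquel-Vera \cite[Theorem 24]{CV15}, so there is no in-paper argument to compare against; I therefore assess your reconstruction on its own terms. Your first half is fine: the inequality $\nil\,\mathrm{Ker}\,H^*(f)\le\secat(f)$ follows as you say, since $\varphi$ is surjective, so cocycle representatives of classes in $\mathrm{Ker}\,H^*(f)$ can be corrected to cocycles lying in $\mathrm{Ker}\,\varphi$, and injectivity of $H(\rho_k)$ (forced by a homotopy retraction of $\rho_k$) kills their $(k+1)$-fold products. You should, however, make explicit the standing hypothesis that $f$ admits a homotopy retraction: it is needed for Theorem \ref{secat FH} to apply at all, and also (see below) to know that $H^*(f)$ is surjective; in the paper's application $f=\Delta_r$ has the retraction $p_1$, but your statement as written suppresses this.

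The hard direction contains a genuine gap. Your plan is to choose the relative Sullivan model $A\hookrightarrow A\otimes\Lambda W\xrightarrow{\varphi}A$ with $A=H^*(X)$ ``so that $\varphi$ and its kernel are literally algebraic shadows of $H^*(f)$ and $\mathrm{Ker}\,H^*(f)$,'' and then to claim that in the quotient by $(\mathrm{Ker}\,\varphi)^{m+1}$ ``the relevant ideal already vanishes.'' This cannot be arranged: for any genuine Sullivan extension $A\otimes\Lambda W$ modelling $Y$, the ideal $\mathrm{Ker}\,\varphi$ is far larger than any lift of $\mathrm{Ker}\,H^*(f)$ (it contains $W$ and everything needed to kill the extra cohomology of $A\otimes\Lambda W^{+}$-type elements), and $(\mathrm{Ker}\,\varphi)^{m+1}\neq 0$ in general, so the vanishing of $(\mathrm{Ker}\,H^*(f))^{m+1}$ does not by itself yield a homotopy retraction of $\rho_m$; ``lifting the identity of $H^*(X)$ through the formality quasi-isomorphisms'' is exactly the step you have not constructed. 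What the argument actually needs — and what Carrasquel-Vera's proof rests on — is the model-invariance/upper-bound statement: for \emph{any} surjective cdga model $\varphi'\colon B\to A'$ of $f$, if the projection $B\to B/(\mathrm{Ker}\,\varphi')^{k+1}$ admits a homotopy retraction then $\secat(f)\le k$ (this comes from the fiberwise-join description of sectional category of \cite{FGKV} and is part of the framework of \cite{CV15, CV17}). Granting that, the upper bound is immediate and no delicate compatibility of filtrations is required: since $f$ is formal and admits a homotopy retraction, $H^*(f)\colon H^*(Y)\to H^*(X)$ is itself a surjective cdga model of $f$, its kernel satisfies $(\mathrm{Ker}\,H^*(f))^{m+1}=0$ with $m=\nil\,\mathrm{Ker}\,H^*(f)$, so the corresponding projection is an isomorphism and trivially admits a retraction, whence $\secat(f)\le m$. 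You gesture at this transfer principle but neither cite nor prove it, and the substitute construction you describe would not close the argument; fixing the write-up amounts to isolating and invoking that invariance result explicitly.
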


We now consider a rational fibration $F\xrightarrow{j}X\xrightarrow{p}B$. We aim to show the formality of the fiberwise diagonal $\Delta_r\colon X\to X^r_B$ in order to prove Theorem \ref{PTC nil ker}. We denote the relative minimal Sullivan model of $F\to X\to B$ by $\mathcal{M}\hookrightarrow \mathcal{M}\otimes \Lambda V\twoheadrightarrow \Lambda V$; namely, there is a commutative diagram
\[
\xymatrix{
  \mathcal{M} \ar[r]\ar[d]^{\simeq}_{m_B}& \mathcal{M}\otimes \Lambda V \ar[r]\ar[d]_{\simeq}^{m_X}& \Lambda V\ar[d]^{\overline{m}}_{\simeq} \\
A_{PL}(B) \ar[r]^{A_{PL}(p)}& A_{PL}(X) \ar[r]^{A_{PL}(j)}& A_{PL}(F),
}
\]
where all columns are quasi-isomorphisms.

\begin{lemma}
  \label{diag comm}
  There exists a commutative diagram
  \[
  \xymatrix{
    \mathcal{M}\otimes (\Lambda V)^{\otimes r}\ar[rr]^{id\otimes \mu_r}\ar[d]_{\xi}^{\simeq} && \mathcal{M}\otimes \Lambda V\ar[d]^{m_X}_{\simeq}\\
  A_{PL}(X^r_B) \ar[rr]^{A_{PL}(\Delta_r)}&& A_{PL}(X).
  }
  \]
\end{lemma}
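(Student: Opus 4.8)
The plan is to build the left vertical quasi-isomorphism $\xi$ out of the data already produced, namely the relative minimal model $\mathcal{M}\hookrightarrow\mathcal{M}\otimes\Lambda V\twoheadrightarrow\Lambda V$ and the comparison $m_X\colon\mathcal{M}\otimes\Lambda V\xrightarrow{\simeq}A_{PL}(X)$, and then to check that the square commutes on the nose. First I would recall that the fiberwise product $X^r_B$ is the $r$-fold homotopy pullback over $B$, so a Sullivan model for it is obtained by the pushout of cdga's, i.e. $\mathcal{M}\otimes_{\mathcal{M}}\cdots\otimes_{\mathcal{M}}(\mathcal{M}\otimes\Lambda V)\cong\mathcal{M}\otimes(\Lambda V)^{\otimes r}$, with the structure map to $A_{PL}(X^r_B)$ induced by $m_B$, $m_X$ and the fact that $A_{PL}$ carries the homotopy pullback square for $X^r_B$ to a homotopy pullback (equivalently homotopy pushout) of cdga's because $p$ is a fibration and $A_{PL}$ is monoidal. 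Concretely, the $r$ projections $p_i\colon X^r_B\to X$ together with $p\circ p_1\colon X^r_B\to B$ assemble $A_{PL}(X^r_B)$, and the $i$-th factor $\Lambda V$ maps in via $A_{PL}(p_i)\circ m_X$ restricted to $\Lambda V$; these are compatible over $\mathcal{M}\xrightarrow{m_B}A_{PL}(B)$, so by the universal property of the coproduct (amalgamated over $\mathcal{M}$) they glue to a cdga map $\xi\colon\mathcal{M}\otimes(\Lambda V)^{\otimes r}\to A_{PL}(X^r_B)$, which is a quasi-isomorphism by the Eilenberg–Moore / Künneth comparison over $A_{PL}(B)$ (using finite rational type so the relevant $\mathrm{Tor}$ collapses and the cohomology of the pushout computes that of the homotopy pullback).

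Having fixed $\xi$, the commutativity of the square is then a matter of tracing generators. On a class $c\in\mathcal{M}$, both composites send $c$ to $m_X(p_1^*$-image$)=A_{PL}(\Delta_r)(\xi(c))$ since $p_1\circ\Delta_r=\mathrm{id}_X$ and $p_i\circ\Delta_r=\mathrm{id}_X$ for every $i$, so the base classes match. On a generator $v$ sitting in the $i$-th tensor factor $\Lambda V$, the top-then-right composite gives $m_X$ applied to the image of $v$ under the multiplication $\mu_r$, i.e. $m_X(v)$; the left-then-bottom composite gives $A_{PL}(\Delta_r)$ applied to $\xi$ of that generator, which by construction is $A_{PL}(\Delta_r)\circ A_{PL}(p_i)\circ m_X(v)=A_{PL}(p_i\circ\Delta_r)(m_X(v))=A_{PL}(\mathrm{id}_X)(m_X(v))=m_X(v)$. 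Since both maps are cdga homomorphisms agreeing on algebra generators, they agree everywhere, giving strict commutativity rather than merely homotopy commutativity.

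The main obstacle is the construction of $\xi$ as an honest quasi-isomorphism rather than just a zig-zag: one must know that the natural map from the cdga pushout $\mathcal{M}\otimes(\Lambda V)^{\otimes r}$ to $A_{PL}(X^r_B)$ is a quasi-isomorphism, which is exactly the statement that $A_{PL}$ takes the iterated homotopy-pullback square defining $X^r_B$ to a homotopy-pushout square of cdga's. This is standard for a single pullback along a fibration (Eilenberg–Moore, or that $\mathcal{M}\to\mathcal{M}\otimes\Lambda V$ is a cofibration so the derived and underived pushouts agree), and the $r$-fold case follows by induction on $r$, each step being a pushout along the cofibration $\mathcal{M}\hookrightarrow\mathcal{M}\otimes\Lambda V$; the finite rational type hypothesis guarantees the Künneth isomorphism needed at each stage. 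Once this is in hand, everything else is formal bookkeeping, and the square commutes strictly as shown above.
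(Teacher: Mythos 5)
Your proposal is correct and follows essentially the same route as the paper: the paper also builds $\xi$ inductively as the pushout (relative Sullivan) model of the iterated pullback along the fibration $p$ (citing \cite[Proposition 15.8]{FHT}), so that $\xi\circ(id\otimes i_\lambda)=A_{PL}(p_\lambda)\circ m_X$ for each factor, and then deduces strict commutativity exactly as you do, from $p_\lambda\circ\Delta_r=\mathrm{id}_X$ together with the fact that the images of the maps $id\otimes i_\lambda$ generate $\mathcal{M}\otimes(\Lambda V)^{\otimes r}$.
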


\begin{proof}
  We first show that there is a quasi-isomorphism $\xi\colon \mathcal{M}\otimes (\Lambda V)^{\otimes r}\to A_{PL}(X^r_B)$ such that, for $1 \le \lambda \le r$, there is a commutative diagram
  \[
  \xymatrix{
    \mathcal{M}\otimes \Lambda V \ar[rr]^{id\otimes i_\lambda}\ar[d]^{\simeq}_{m_X}&& \mathcal{M}\otimes (\Lambda V)^{\otimes r}\ar[d]^{\xi}_{\simeq}\\
  A_{PL}(X) \ar[rr]^{A_{PL}(p_\lambda)}&& A_{PL}(X^r_B),
  }
  \]
  where the columns are Sullivan models and $i_\lambda$ and $p_\lambda$ denote the $\lambda$-th injection and projection, respectively. Recall that, for $1 \le \lambda \le r-1$, there is a pullback diagram
  \[
  \xymatrix{
  X^{\lambda+1}_B \ar[r] \ar[d]_{p_{\lambda+1}}& X^{\lambda}_B \ar[d]\\
  X \ar[r]^{p}& B.
  }
  \]
  Then for $\lambda=1$, the pushout construction as in \cite[Proposition 15.8]{FHT} asserts the existence of a homomorphism $\xi_{2}\colon V_B\otimes(\Lambda V)^{\otimes 2}\to A_{PL}(X^2_B)$ and a commutative diagram
  \[
  \xymatrix{
    \mathcal{M}\otimes \Lambda V \ar[rr]^{id\otimes i_1}\ar[d]^{\simeq}_{m_X}&& \mathcal{M}\otimes (\Lambda V)^{\otimes 2}\ar[d]^{\xi_2}_{\simeq}&& \mathcal{M}\otimes \Lambda V \ar[ll]_{id\otimes i_2}\ar[d]_{\simeq}^{m_X}\\
  A_{PL}(X) \ar[rr]^{A_{PL}(p_1)}&& A_{PL}(X^2_B)&& A_{PL}(X)\ar[ll]_{A_{PL}(p_2)}.
  }
  \]
  By continuing this process inductively, we obtain the homomorphism $\xi=\xi_r$.

  Since the composite $p_\lambda\circ\Delta_r$ is the identity, we have
  \begin{align*}
    m_X\circ(id\otimes \mu_r)\circ(id\otimes i_\lambda)
    &= m_X \\
    &= A_{PL}(\Delta_r)\circ A_{PL}(p_\lambda)\circ m_X \\
    &= A_{PL}(\Delta_r)\circ\xi\circ(id\otimes i_\lambda).
  \end{align*}
  Thus we obtain $m_X\circ(id\otimes \mu_r) = A_{PL}(\Delta_r)\circ\xi$ as stated.
\end{proof}

Next, we recall the concept of bigraded model introduced by Halperin and Stasheff \cite[Section 2]{HS}. For a formal space $F$, its minimal model $(\Lambda V, d)$ admits a structure of a bigraded algebra such that:
\begin{enumerate}
  \item the vector space $V$ has a second grading $V=\oplus_{i\ge 0}V_i$, which gives $\Lambda V$ the structure of a bigraded algebra,
  \item $d$ is homogeneous of lower degree $-1$, and
  \item $H_+(\Lambda V, d) = 0$ and $H(\Lambda V, d) = H_0(\Lambda V, d)$.
\end{enumerate}
\noindent
Such bigraded algebra $(\Lambda V, d)$ is called a \emph{bigraded model} of $F$. Remark that, for two bigraded models $(\Lambda V, d)$ and $(\Lambda W, d')$ of formal spaces $F$ and $F'$, the tensor product $(\Lambda V\otimes \Lambda W, d\otimes d')$ has a structure of a bigraded model of $F\times F'$, with the second grading $V\oplus W =\oplus_{i\ge 0}(V_i\oplus W_i)$.

A fibration $F \to X\to B$ is said to be \emph{totally noncohomologous to zero} with respect to a ring $R$, or simply \emph{TNCZ}, if the induced morphism $H^*(X; R) \to H^*(F; R)$ is surjective. Lupton \cite{L} utilized a bigraded model to prove the following proposition.

\begin{proposition}
  \cite[Proposition 3.2]{L}
  Let $F\to X\to B$ be a rational TNCZ fibration. If $F$ elliptic and formal and $B$ is formal, then $X$ is formal.
\end{proposition}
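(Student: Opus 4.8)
The plan is to produce a bigraded model of $X$ out of the bigraded models of $B$ and $F$; since a minimal Sullivan algebra admits a bigrading with the three properties recalled above exactly when it is the model of a formal space, this proves the claim. As $B$ is formal it has a bigraded model $(\Lambda W,d_B)$, and as $F$ is formal and elliptic it has a bigraded model $(\Lambda V,d_V)$ with $V$ finite dimensional, so that the lower grading $V=\bigoplus_{i\ge 0}V_i$ is bounded. Choose a relative Sullivan model of the fibration extending $(\Lambda W,d_B)$ with fibre $(\Lambda V,d_V)$; this gives a model $(\Lambda W\otimes\Lambda V,D)$ of $X$ with $D=d_B+d_V+\partial$ and $\partial(V)\subseteq\Lambda^{\ge 1}W\otimes\Lambda V$, and since $D$ has no linear part this model is minimal.

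Equip $\Lambda W\otimes\Lambda V$ with the total lower grading obtained by adding the two lower gradings, so that $d_B$ and $d_V$ are homogeneous of lower degree $-1$. The crux is to adjust the relative model so that $\partial$, and hence $D$, is homogeneous of lower degree $-1$ as well. I would run a standard obstruction-theoretic induction over the generators of $V$: supposing $D$ has been homogenized on generators of smaller degree, the components of $\partial v$ of the wrong total lower degree assemble into a cocycle whose cohomology class is the obstruction to homogenizing $v$, and the TNCZ hypothesis — surjectivity of $H^*(X)\to H^*(F)$, equivalently collapse of the Serre spectral sequence and of the associated spectral sequence of the relative model — forces that class to vanish, so that a correction $v\mapsto v+w$ with $w\in\Lambda^{\ge 1}W\otimes\Lambda V$ kills it without changing the quotient $(\Lambda V,d_V)$, i.e. without changing the fibration. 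Boundedness of the lower grading of $V$ (ellipticity) makes the induction terminate and keeps everything of finite type. After this step $(\Lambda W\otimes\Lambda V,D)$ is a bigraded algebra.

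It then remains to verify the axiom $H_+(\Lambda W\otimes\Lambda V,D)=0$ with $H=H_0$. Filtering $(\Lambda W\otimes\Lambda V,D)$ by $W$-word length yields a spectral sequence with $E_1=\Lambda W\otimes H(\Lambda V,d_V)$ and $E_2=H(\Lambda W,d_B)\otimes H(\Lambda V,d_V)\cong H^*(B)\otimes H^*(F)$, which by TNCZ collapses; since $H(\Lambda W,d_B)$ and $H(\Lambda V,d_V)$ are concentrated in lower degree $0$, so is $H^*(X)$, and $H_0\cong H^*(X)$ as algebras. Hence $(\Lambda W\otimes\Lambda V,D)$ is a bigraded model of $X$, and therefore $X$ is formal.

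The main obstacle is the homogenization step, and this is precisely where both hypotheses are indispensable: TNCZ supplies the vanishing of the obstruction classes (a non-TNCZ fibration genuinely encodes non-formal attaching data in $\partial$), while ellipticity of $F$ bounds the lower grading on $V$, so that the inductive corrections stop and, via Poincar\'e duality of $H^*(F)$, no spurious positive-lower-degree cohomology can survive in $H^*(X)$. Organizing the obstruction theory so that the TNCZ splitting is compatible with the word-length filtration is the technically delicate point.
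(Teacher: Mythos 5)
There is a genuine gap at the heart of your argument: the homogenization step. You assert that the obstruction class to making $\partial$ homogeneous of lower degree $-1$ vanishes because of the TNCZ hypothesis, but this is precisely the content of the proposition and is not justified by anything you write; worse, TNCZ alone cannot force it. The paper itself recalls Thomas's example of a TNCZ fibration $(S^3\vee S^3)_\Q\to X\to (S^3\times S^5)_\Q$ with formal base and \emph{formal} fiber whose total space is not formal, so the obstructions you describe genuinely survive there. Consequently ellipticity of $F$ must do real work in killing the obstructions, whereas in your sketch it is only invoked to bound the lower grading of $V$ (so the induction terminates) together with a vague appeal to Poincar\'e duality; nothing in your argument distinguishes the elliptic case, where formality holds, from the hyperbolic case, where it fails. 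Until you exhibit concretely how ellipticity (e.g.\ the structure of the bigraded model of an elliptic formal space, its fundamental class, finite dimensionality of $H^*(F)$) enters the vanishing of each obstruction class, the proof does not go through.

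Two further points. First, smaller technical issues: the relative model $(\Lambda W\otimes\Lambda V,D)$ need not be minimal merely because it is a relative minimal extension --- $\partial v$ may contain a linear term in $W$ --- and your identification $E_2\cong H^*(B)\otimes H^*(F)$ with collapse of the $W$-word-length spectral sequence is not the same statement as TNCZ (which concerns the Serre spectral sequence); transporting collapse to your model-level filtration compatibly with the lower grading implicitly uses the homogeneity you are trying to establish. Second, for comparison, Lupton's proof (the one the paper relies on and adapts) deliberately avoids your stronger goal: he does not build a bigraded model of $X$, but a \emph{relative filtered model} $(H(B)\otimes\Lambda V,D)$ over the cohomology of the formal base, requiring only the filtration-lowering condition $D(V_i)\subset H(B)\otimes(\Lambda V)_{(i-1)}$, and then uses TNCZ together with ellipticity of the fiber to prove $H_+=0$ and that the projection onto $H_0$ is a quasi-isomorphism onto $H^*(X)$, which yields formality directly. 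That weaker, filtered (rather than strictly bigraded) target is what makes the obstruction problem tractable, and is the step your proposal replaces with an unsupported assertion.
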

\noindent
In his proof, Lupton constructed a model
\[
(H(B), 0) \to (H(B)\otimes\Lambda V, D) \to (\Lambda V, d)
\]
such that:
\begin{enumerate}
  \item $(\Lambda V, d)$ is a bigraded model of $F$ with the second grading $V=\oplus_{i\ge 0}V_i$,
  \item $D(V_0)=0$ and
  \[
  D(V_i)\subset H(B)\otimes (\Lambda V)_{(i-1)} := \bigoplus_{j=0}^{i}(\Lambda V)_j.
  \]
\end{enumerate}
\noindent
We may call it \emph{a relative filtered model}, regarding it as a generalization of the filtered model introduced in \cite[Section 4]{HS}. We will see that the argument as in the proofs of \cite[Propositions 3.1 and 3.2]{L} can be applied to  the proof of the formality of $\Delta_r$.

\begin{lemma}
  \label{filtered}
  Let $F\to X\to B$ be a rational TNCZ fibration. If $F$ elliptic and formal and $B$ is formal, there is a commutative diagram
  \[
  \xymatrix{
  H(B) \ar[r]\ar@{=}[d]& (H(B)\otimes (\Lambda V)^{\otimes r},D) \ar[r]\ar[d]^{id\otimes \mu_r}& (\Lambda V)^{\otimes r}\ar[d]^{\mu_r}\\
  H(B) \ar[r]& (H(B)\otimes \Lambda V,D') \ar[r]& \Lambda V
  }
  \]
  where the two rows are the relative filtered model of
  \[
  F^r\to X^r_B\to B\quad\text{and}\quad F\to X\to B.
  \]
\end{lemma}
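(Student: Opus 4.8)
The plan is to mimic Lupton's construction of the relative filtered model for the single fibration $F\to X\to B$, but carry it out for the fiberwise product $F^r\to X^r_B\to B$ in a way that is manifestly compatible with the multiplication maps $\mu_r$ and $D'$. First I recall that, since $F$ is formal and elliptic, it has a bigraded model $(\Lambda V,d)$; by the remark preceding the lemma, $(\Lambda V)^{\otimes r}=(\Lambda(V^{\oplus r}),d^{\otimes r})$ is a bigraded model of $F^r$ with second grading $(V^{\oplus r})_i=(V_i)^{\oplus r}$, and $\mu_r\colon (\Lambda V)^{\otimes r}\to\Lambda V$ is a morphism of bigraded algebras (it preserves both gradings, as it is induced by the fold map $V^{\oplus r}\to V$, which is homogeneous of lower degree $0$). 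Next I would run the inductive construction of the differential $D$ on $H(B)\otimes(\Lambda V)^{\otimes r}$ exactly as in \cite[Proposition 3.1]{L}: set $D|_{(V^{\oplus r})_0}=0$, and extend $D$ over $(V^{\oplus r})_i$ by the obstruction-theoretic argument so that $D$ raises the filtration degree by one less than expected, i.e. $D((V^{\oplus r})_i)\subset H(B)\otimes(\Lambda(V^{\oplus r}))_{(i-1)}$, while $(H(B)\otimes(\Lambda V)^{\otimes r},D)$ is a model of $X^r_B$. The key point is to perform this extension and the corresponding one for $(H(B)\otimes\Lambda V,D')$ simultaneously, so that at each inductive stage the diagram commutes.

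Concretely, I would argue by induction on the second (filtration) degree $i$. At stage $i$, the differential $D'$ on $H(B)\otimes\Lambda V$ has already been chosen on $V_{\le i}$, and $D$ on $H(B)\otimes(\Lambda V)^{\otimes r}$ on $(V^{\oplus r})_{\le i-1}$; I want to define $D$ on $(V^{\oplus r})_i=(V_i)^{\oplus r}$. Because $\mu_r$ is surjective and splits as a bigraded map (the inclusion $i_1$ of the first factor, or better a symmetric splitting is unnecessary — $i_1$ suffices since $p_1$ is a retraction), the obstruction classes in $H(B)\otimes H(\Lambda V^{\otimes r})$ controlling the choice of $D$ on the $\lambda$-th copy of $V_i$ can be taken to be the images under $i_\lambda$ (resp. pullbacks) of the obstruction classes for $D'$ on $V_i$; one then defines $D$ on the $\lambda$-th copy of $v\in V_i$ by transporting the formula for $D'v$ through the $\lambda$-th inclusion, adjusting by a $\mu_r$-compatible correction term supplied by the standard lifting lemma. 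This forces $(id\otimes\mu_r)\circ D=D'\circ(id\otimes\mu_r)$ on generators, hence everywhere, and it forces the right-hand square to commute; the left-hand square commutes trivially since $D$ and $D'$ both vanish on $H(B)$ and on the degree-$0$ generators, and $\mu_r$ is the identity on $H(B)$. That the two rows are genuinely the relative filtered models of the respective fibrations follows because, after forgetting the filtration, $(id\otimes\mu_r)$ is a model of $\Delta_r$ by Lemma \ref{diag comm} and the constructions agree with Lupton's up to quasi-isomorphism.

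The main obstacle I anticipate is bookkeeping the obstruction theory compatibly across the two rows: Lupton's extension of $D$ over $V_i$ involves choosing preimages under $d$ of certain cocycles in $H(B)\otimes(\Lambda V)_{\le i-1}$, and these choices are not unique, so one must verify that a coherent system of choices can be made for $(\Lambda V)^{\otimes r}$ that projects correctly under $\mu_r$ to the choices made for $\Lambda V$. This is where the surjectivity of $\mu_r$ together with the existence of the bigraded section $i_1$ (and the acyclicity of $(\Lambda V^{\otimes r})_+$ in positive lower degree, since $F^r$ is formal) is used: one first fixes the choices downstairs for $D'$, pulls them back along a section, and then corrects within $\ker\mu_r$, which is again a sum of acyclic pieces in positive filtration degree, so the correction exists without creating new obstructions. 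Once this coherence is established, the commutativity of the diagram and the identification of the rows as relative filtered models are formal, and the lemma follows.
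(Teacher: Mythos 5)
Your overall strategy---give $(\Lambda V)^{\otimes r}$ its bigraded-model structure for $F^r$ and run Lupton's construction of the relative filtered model for the two fibrations simultaneously, transporting the differential along the inclusions $i_\lambda$ so that $id\otimes\mu_r$ intertwines $D$ and $D'$---is essentially the route the paper takes (there the compatibility is phrased as replacing the basis elements $\{i_\lambda(v_\alpha)\}$ of $V^{\oplus r}$ coherently, using that $(id\otimes\mu_r)\circ(id\otimes i_\lambda)=id$). However, there is a genuine gap at the point where you invoke \cite[Proposition 3.1]{L} for the top row. That construction requires the fibration to be TNCZ, and for $F^r\to X^r_B\to B$ this is not a stated hypothesis of the lemma and you never verify it; yet it is exactly what licenses the normalization $D|_{(V^{\oplus r})_0}=0$ and the filtration condition for the top fibration. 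The paper supplies precisely this step: TNCZ is equivalent to collapse of the rational Serre spectral sequence at $E_2$, and this collapse passes from $F\to X\to B$ to the fiberwise product $F^r\to X^r_B\to B$.

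Relatedly, your justification that the top row really is a (relative filtered) model of $X^r_B$ is too thin. Lemma \ref{diag comm} concerns the differential that $\mathcal{M}\otimes(\Lambda V)^{\otimes r}$ already carries as a Sullivan model of $X^r_B$; it says nothing about the new differential $D$ you build generator by generator, so ``the constructions agree with Lupton's up to quasi-isomorphism'' is an assertion, not an argument. Two clean repairs are available: either start, as the paper does, from the commutative diagram of $H(B)$-models produced by Lemma \ref{diag comm} together with the formality of $B$, $F$ and $F^r$, and then only change basis elements, so the quasi-isomorphism to $A_{PL}(X^r_B)$ is carried along; or note that your ``transport $D'v$ through $i_\lambda$'' prescription makes $(H(B)\otimes(\Lambda V)^{\otimes r},D)$ exactly the $r$-fold tensor product over $H(B)$ of $(H(B)\otimes\Lambda V,D')$, which models the fiberwise product by the pushout argument of \cite[Proposition 15.8]{FHT} after base change along the quasi-isomorphism to $H(B)$; in that case $D(V_0^{\oplus r})=0$ and $D(i_\lambda(V_i))\subset H(B)\otimes\bigl((\Lambda V)^{\otimes r}\bigr)_{(i-1)}$ are inherited from $D'$, commutativity with $id\otimes\mu_r$ holds on generators by construction, and no correction terms in $\ker\mu_r$ are needed at all. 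As written, though, the TNCZ verification for the product fibration (equivalently, the proof that your top row models $X^r_B$) is missing.
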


\begin{proof}
  Since $F$, $F^r$ and $B$ are formal, one can obtain a commutative diagram
  \begin{equation}
    \label{models}
    \xymatrix{
    A_{PL}(X^r_B) \ar[d]_{A_{PL}(\Delta_r)}& \mathcal{M}\otimes (\Lambda V)^{\otimes r} \ar[l]_{\simeq}^{\xi}\ar[r]^{\simeq}\ar[d]^{id\otimes\mu_r}& (H(B) \otimes (\Lambda V)^{\otimes r}, \delta) \ar[d]\\
    A_{PL}(X) & \mathcal{M}\otimes \Lambda V \ar[l]_{\simeq}^{m_X}\ar[r]^{\simeq}& (H(B) \otimes \Lambda V, \delta')
    }
  \end{equation}
  by Lemma \ref{diag comm}, which induces a commutative diagram
  \[
  \xymatrix{
  H(B) \ar[r]\ar@{=}[d]& (H(B)\otimes (\Lambda V)^{\otimes r},\delta) \ar[r]\ar[d]^{id\otimes \mu_r}& (\Lambda V)^{\otimes r}\ar[d]^{\mu_r}\\
  H(B) \ar[r]& (H(B)\otimes \Lambda V,\delta') \ar[r]& \Lambda V
  }
  \]
  where two rows are models for the two fibrations in the statement. Remark that $(\Lambda V)^{\otimes r}$ and $\Lambda V$ have structures of bigraded models of $F^r$ and $F$, respectively. Then by the argument in the proof of \cite[Proposition 3.1]{L}, the relative filtered model of the fibrations can be constructed by replacing the basis elements of $H(B)\otimes (\Lambda V)^{\otimes r}$ and $H(B)\otimes \Lambda V$.

  We now show that these two constructions are compatible. Recall that, if we denote the basis of $V$ by $\{v_\alpha\}_{\alpha\in I}$, then the basis elements of $V^{\oplus r}$ can be described by $\{i_{\lambda}(v_\alpha)\}_{\alpha\in I, 1\le \lambda \le r}$. Furthermore, the fibration $F\to X\to B$ is TNCZ if and only if the rational cohomology Serre spectral sequence collapses at the $E_2$-term, which ensures that the fibration $F^r \to X^r_B \to B$ is also TNCZ. Then since the composite $(id\otimes \Delta_r)\circ (id\otimes i_\lambda)$ is the identity, all basis elements $\{i_{\lambda}(v_\alpha)\}_{\alpha\in I, 1\le \lambda \le r}$ can be replaced without any loss of of naturality. Thus we obtain the diagram as stated.
\end{proof}

We are now ready to prove Theorem \ref{PTC nil ker}.

\begin{proof}
  [Proof of Theorem \ref{PTC nil ker}]
  By Lemma \ref{filtered}, we have a commutative diagram
  \[
  \xymatrix{
  A_{PL}(X^r_B) \ar[d]_{A_{PL}(\Delta_r)}& \mathcal{M}\otimes (\Lambda V)^{\otimes r} \ar[l]_{\simeq}^{\xi}\ar[r]^{\simeq}\ar[d]^{id\otimes\mu_r}& (H(B) \otimes (\Lambda V)^{\otimes r}, D) \ar[d]\\
  A_{PL}(X) & \mathcal{M}\otimes \Lambda V \ar[l]_{\simeq}^{m_X}\ar[r]^{\simeq}& (H(B) \otimes \Lambda V, D').
  }
  \]
  Then by the argument as in the proof of \cite[Proposition 3.2]{L}, we get
  \[
  H_{+}(H(B)\otimes (\Lambda V)^{\otimes r},D) = H_{+}(H(B)\otimes \Lambda V, D') = 0.
  \]
  We also obtain a commutative diagram
  \[
  \xymatrix{
  (H(B)\otimes (\Lambda V)^{\otimes r},\delta) \ar[r]^{\pi}\ar[d]& \frac{H(B)\otimes (\Lambda V)^{\otimes r}}{\delta (V_1^{\oplus r})} \ar@{=}[r]\ar[d] & H_{0}(H(B)\otimes (\Lambda V)^{\otimes r},\delta) \ar[d]^{\Delta_r^*}\\
  (H(B)\otimes \Lambda V,\delta') \ar[r]^{\pi'}& \frac{H(B)\otimes (H(B)\otimes \Lambda V)}{\delta' (V_1)} \ar@{=}[r] & H_{0}(H(B)\otimes \Lambda V,\delta'),
  }
  \]
  where the projections $\pi$ and $\pi'$ are quasi-isomorphisms. Combining this diagram with \eqref{models}, we have proved that $\Delta_r\colon X\to X^r_B$ is formal. Thus by \eqref{PTC Delta}, Propositions \ref{PTC quot} and \ref{secat formal}, the proof is finished.
\end{proof}

We exhibit some examples to show that the hypotheses in Theorem \ref{PTC nil ker} are essential.

\begin{example}
  \label{ex nonformal}
  Let $X$ be a rational space whose minimal model is given by
  \[
  \Lambda(x, y, z),\quad |x|=|y|=3,\,|z|=5,\quad dx=dy=0, dz=xy,
  \]
  and let $X\to X\to \ast$ be the trivial fibration. It is well known that the space $X$ is not formal. Kishimoto and Yamaguchi \cite{KY} computed $\TC_r(X)$ and obtained that, for $r\ge4$,
  \[
    \zcl_r[X\to \ast] \le 2r < 3(r-1) = \TC_r[X\to \ast] = \TC_r(X).
  \]
\end{example}

\begin{example}
  \label{ex hyperbolic}
  Let $F = (S^3\vee S^3)_\Q \to X\to B = (S^3\times S^5)_\Q$ be the TNCZ fibration as in \cite[III. 13]{T82} with a relative Sullivan model
  \begin{gather*}
    \Lambda(b, b') \to \Lambda((b, b', x, y, t, u, v)\oplus {V}^{\ge8}, d) \to \Lambda((x, y, t, u, v)\oplus {V}^{\ge8}),\\
    |b|=|x|=|y|=3, |b'|=|t|=5, |u|=|v|=7,\\ db=db'=dx=dy=0, dt=xy, du= tx-b'x, dv=ty.
  \end{gather*}
  \noindent
  Remark that the fiber and the base are formal, yet the fiber is hyperbolic. Thomas \cite{T82} proved that the total space $X$ is not formal.

  In this case, we have:
  \begin{equation}
    \label{CE2}
    \zcl_2[X\to B] = 2 < 3 \le \HTC_2[X\to B] \le \TC_2[X\to B].
  \end{equation}
  To see this, we first show that $\zcl_2[X\to B] = 2$. The fiberwise diagonal $\Delta_2\colon X\to X^2_{B}$ is modelled by
    \begin{gather*}
      \Delta_2^*\colon\Lambda((b, b', x_i, y_i, t_i, u_i, v_i)_{i=1, 2}\oplus (V^{\ge8})^{\oplus 2}, d')\to\Lambda((b, b', x, y, t, u, v)\oplus V^{\ge8}, d),\\
      \Delta_2^*(x_i)=x, \Delta_2^*(y_i)=y,\cdots,\Delta_2^*(v_i)=v
    \end{gather*}
    \noindent
    where the differential is given by
    \[
    d'b=d'b'=d'x_i=dy_i=0, d't_i=x_iy_i, d'u_i= t_ix_i-b'x_i \quad\text{and}\quad d'v_i=t_iy_i.
    \]
    Then for $1\le m\le8$, we have
    \[
    \mathrm{Ker}\,H^m(\Delta_2) =
    \begin{cases}
      \langle x_1-x_2, y_1-y_2 \rangle & (m = 3)\\
      \langle b(x_1-x_2), b(y_1-y_2), x_1y_2, x_2y_1 \rangle & (m = 6)\\
      \langle t_1x_1-b'x_2, t_2x_2-b'x_1, t_1y_2-t_2y_1, b'(y_1-y_2) \rangle & (m = 8)\\
      0 & (otherwise.)
    \end{cases}
    \]
    Thus the product of any three elements in $\mathrm{Ker} H^m(\Delta_2)$ is equal to zero. By degree reasons of the rational cohomology Serre spectral sequence for the fibration $F^2 \to X^2_B \to B$, one can deduce that $H^{\ge 15}(X^2_B) = 0$. Thus we obtain $\zcl_2[X\to B] = 2$. On the other hand, it is straightforward to see that the element $(x_1-x_2)(y_1-y_2)(t_1-t_2) \in \mathrm{Ker}\, (id\otimes\mu_2)^3$ is a nontrivial cocycle. Then we get $3 \le \HTC_2[X\to B]$ by Definition \ref{MTC and HTC}. Thus, by \eqref{PTC ineqs}, we obtain \eqref{CE2}.
\end{example}

\begin{example}
  \label{ex not TNCZ}
  Let $X$ be a rational space as in Example \ref{ex nonformal}, and consider a rational fibration $X\to B=S^3_\Q$ with a relative Sullivan model $\Lambda(x) \to \Lambda(x, y, z)$. Remark that the fiber has the rational homotopy type of $(S^3\times S^5)_\Q$, so is formal, and the fibration is not TNCZ.
  
  For $r\ge 5$, we aim to prove:
  \begin{equation}
    \label{CE3}
    \zcl_r[X\to B] \le 2r-3 < 2r-2 = \TC_r[X\to B].
  \end{equation}
    We first show the first inequality. The fiberwise diagonal $\Delta_r\colon X\to X^r_B$ is modelled by
    \begin{gather*}
      \Delta_r^*\colon\Lambda(x,y_1,\dots,y_r,z_1,\dots,z_r)\to \Lambda(x, y, z),\\
      \Delta_r^*(x)=x, \Delta_r^*(y_i)=y\quad\text{and}\quad\Delta_r^*(z_i)=z,
    \end{gather*}
    where the differential is given by $d(x)=d(y_i)=0$ and $d(z_i)=xy_i$. It is straightforward to see that any element that has word length $1$ and represents some element of $\mathrm{Ker}\, H^*(\Delta_r)$ is described as a linear sum
    \[
    \sum_{1\le i\le r-1} a_{i}(y_i-y_{i+1})\quad (a_{i}\in\Q).
    \]
    We also have that $(y_1-y_2)(y_2-y_3)\cdots(y_r-y_1)=0$. Since the word length of elements in  $\Lambda(x,y_1,\dots,y_r,z_1,\dots,z_r)$ are at most $2r+1$, we get
    \[
      \zcl_r[X\to B] \le (r-1)+\left\lfloor\frac{(2r+1)-(r-1)}{2}\right\rfloor = r +\left\lfloor \frac{r}{2}\right\rfloor \le 2r-3
    \]
    for $r\ge5$, which is the first inequality. On the other hand, Theorem \ref{main2} asserts that $\TC_r[X\to B] = 2r-2$, thus we obtain \eqref{CE3}.
\end{example}

We also remark that the hypothesis of Theorem \ref{PTC nil ker} is closely related to the conjecture of Halperin:
\begin{conjecture}(Halperin)
  \label{H conj}
    If $F$ is an elliptic space with the minimal model $\Lambda V$ such that $\dim V^{\mathrm{odd}} = \dim V^{\mathrm{even}}$, then any fibration $F\to X\to B$ is TNCZ.
\end{conjecture}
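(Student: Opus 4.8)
The final statement is the \emph{Halperin conjecture}, a famous open problem; accordingly, what follows describes the standard line of attack and the point at which it stalls, rather than a complete argument. The plan is to reduce the assertion, in two stages, to a purely algebraic question about the rational cohomology algebra of $F$.

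First I would replace an arbitrary base by the universal one. Every fibration with fibre $F$ is classified by a map $B\to \mathrm{Baut}_1(F)$ into the classifying space of the identity component of the monoid of self-homotopy equivalences of $F$, and the TNCZ property is preserved under pullback, so it suffices to prove that the universal fibration $F\to E\to \mathrm{Baut}_1(F)$ is TNCZ. Working rationally with the minimal model $(\Lambda V,d)$ of $F$, Sullivan's model of this universal fibration is built from the derivation complex of $(\Lambda V,d)$, and being TNCZ is equivalent to triviality of the induced action of $\pi_\ast(\mathrm{aut}_1(F))$ on $H^\ast(F;\Q)$ --- equivalently, to the collapse of the rational Serre spectral sequence at $E_2$, exactly the kind of reformulation exploited in the proof of Lemma \ref{filtered}.

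Second, by a theorem of Meier this becomes commutative algebra. Since $\dim V^{\mathrm{odd}}=\dim V^{\mathrm{even}}$, the space $F$ is positively elliptic, so $H^\ast(F;\Q)$ is a complete intersection $\Q[x_1,\dots,x_n]/(f_1,\dots,f_n)$ with the $x_i$ in even degree and $f_1,\dots,f_n$ a regular sequence, and Halperin's conjecture for $F$ is equivalent to the nonexistence of a nonzero derivation of $H^\ast(F;\Q)$ of negative degree. I would then attack this directly: a derivation $\theta$ lowering degree by $k>0$ is determined by its values $\theta(x_i)$, and the relations of $H^\ast(F;\Q)$ force $\sum_i (\partial f_j/\partial x_i)\,\theta(x_i)\in (f_1,\dots,f_n)$ for each $j$; the aim is to deduce $\theta(x_i)\in (f_1,\dots,f_n)$ from regularity of the sequence together with the nonvanishing of the Jacobian $\det(\partial f_j/\partial x_i)$ --- which represents the fundamental class of $F$ --- and the degree constraints forced by ellipticity.

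The hard part --- indeed, the reason the conjecture is still open --- is precisely this last step: no uniform argument is known showing that a regular sequence of the shape arising from an elliptic space annihilates every negative-degree derivation of the quotient algebra. The conjecture has been verified only in a patchwork of cases: equal-rank homogeneous spaces $G/H$ (Shiga--Tezuka), cohomology algebras with few generators (Lupton, Markl), and products and suitable bundles built from these, each time by using extra structure of the $f_j$ that is unavailable in general. Completing the plan above would amount to isolating a commutative-algebraic property of the Jacobian ideal of $(f_1,\dots,f_n)$ that is genuinely special to elliptic spaces, and it is the absence of such a property in the literature that leaves the statement as a conjecture.
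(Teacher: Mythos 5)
You have correctly recognized that this statement is Halperin's conjecture, which the paper records as an open problem and does not prove; there is no proof in the paper to compare against, so declining to supply one is the right call. Your account of the standard reduction---pulling back from the universal fibration over $\mathrm{Baut}_1(F)$ and then invoking Meier's equivalence between the TNCZ property and the vanishing of negative-degree derivations of $H^*(F;\Q)$---is accurate, as is your identification of the point at which every known approach stalls.
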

\noindent
Such spaces are called as $F_0$-\emph{spaces}, and known to be formal; for instance, even-dimensional sphere $S^{2n}$, homogeneous spaces and K\"{a}hler manifolds are $F_0$-spaces. Although there has been amount of studies on the affirmative cases of Halperin's conjecture, it is still open. For recent progress, see \cite{KW}.


\section{An upper bound by odd-degree extension}\label{Bound}

In this section, we give a proof of Theorem \ref{main 1}. We then establish an upper bound of $\TC_r[X\to B]$ as a consequence of Theorems \ref{PTC nil ker} and \ref{main 1}.

\begin{proof}[Proof of Theorem \ref{main 1}]
  We begin with the case which the kernel of the map $\pi_\ast(F)\to\pi_\ast(\widehat{F})$ is one-dimensional, say, of degree $2n+1$. Rationally, this case corresponds to a homotopy commutative diagram
  \[
  \xymatrix{
  K(\Q, 2n+1) = S^{2n+1}_\Q \ar@{=}[r]\ar[d]& S^{2n+1}_\Q \ar[rr]\ar[d]&& \ast \ar[d]\\
  F \ar[r]\ar[d]& X \ar[rr]\ar[d]&& B \ar@{=}[d]\\
  \widehat{F} \ar[r]& \widehat{X} \ar[rr]&& B,
  }
  \]
  where all columns and rows are rational fibrations. We set relative Sullivan models of the above diagram by
  \[
  \xymatrix{
  \Lambda\langle u \rangle && \Lambda\langle u \rangle \ar@{=}[ll]&& \Q \ar[ll]\\
  \Lambda V \ar[u]&& C\otimes\Lambda V \ar[ll]\ar[u]&& C \ar[ll]\ar[u]\\
  \Lambda \widehat{V} \ar[u]&& C\otimes\Lambda \widehat{V} \ar[ll]\ar[u]&& C,\ar[ll]\ar@{=}[u]
  }
  \]
  where $|u| = 2n+1$ and $V=\widehat{V}\oplus\langle u \rangle$.

  Suppose that $\TC_r[C\hookrightarrow C\otimes \Lambda\widehat{V}] = k$ and let $\hat{\iota}\colon C\otimes (\Lambda\widehat{V})^{\otimes r} \hookrightarrow C\otimes (\Lambda\widehat{V})^{\otimes r} \otimes \Lambda \widehat{U}$ denote a relative Sullivan model of the projection
  \[
  \rho_k\colon C\otimes (\Lambda\widehat{V})^{\otimes r} \twoheadrightarrow \frac{C\otimes (\Lambda\widehat{V})^{\otimes r}}{(\mathrm{Ker}\, (id\otimes \hat{\mu}))^{k+1}},
  \]
  where $\hat{\mu}\colon (\Lambda\widehat{V})^{\otimes r}\to \Lambda\widehat{V}$ denotes the multiplication.
  We also take a relative Sullivan model $\iota\colon C\otimes (\Lambda V)^{\otimes r} \hookrightarrow C\otimes (\Lambda V)^{\otimes r} \otimes \Lambda U$ of the projection
  \[
  \rho_{k+(r-1)}\colon C\otimes (\Lambda V)^{\otimes r} \twoheadrightarrow \frac{C\otimes (\Lambda V)^{\otimes r}}{(\mathrm{Ker}\, (id\otimes \mu))^{k+1+(r-1)}},
  \]
  where $\mu\colon (\Lambda V)^{\otimes r}\to \Lambda V$ denotes the multiplication.
  Then there is a commutative diagram
  \[
  \xymatrix{
  C\otimes (\Lambda V)^{\otimes r}= C\otimes (\Lambda \widehat{V})^{\otimes r}\otimes (\Lambda \langle u \rangle)^{\otimes r} \ar[rr]^{\hat{\iota}\otimes id} \ar[rrd]_{\rho_k \otimes id} \ar[d]_{\iota}&& C\otimes (\Lambda \widehat{V})^{\otimes r} \otimes \Lambda \widehat{U} \otimes (\Lambda \langle u \rangle)^{\otimes r}\ar[d]_{\simeq}\\
  C\otimes (\Lambda V)^{\otimes r} \otimes \Lambda U \ar[d]_{\simeq}&& \frac{C\otimes (\Lambda \widehat{V})^{\otimes r}}{(\mathrm{Ker}\, (id\otimes \hat{\mu}))^{k+1}} \otimes (\Lambda \langle u \rangle)^{\otimes r} \\
  \frac{C\otimes (\Lambda V)^{\otimes r}}{(\mathrm{Ker}\, (id\otimes \mu))^{k+1+(r-1)}} \ar@{.>}[rru].
  }
  \]
  Let $K$ and $\widehat{K}$ denote the kernel of $\mu$ and $\hat{\mu}$, respectively. We have
  \[
  \mathrm{Ker}\,(id\otimes\mu) = C \otimes (\widehat{K}\otimes(\Lambda \langle u \rangle)^{\otimes r} + \Lambda \widehat{V} \otimes K).
  \]
  Since $K^{r} = 0$ (cf. \cite[Section 4]{KY}), we get
  \[
  (\rho_k\otimes id)((\mathrm{Ker}\, (id\otimes \mu))^{k+1+(r-1)}) = 0,
  \]
  Then, in the above diagram, the homomorphism $\rho_k\otimes id$ can be factorized, and so, by an argument in the proof of \cite[Theorem 3.2]{HRV}, there is a homotopy retraction of $\rho_{k+(r-1)}$. Thus we proved that
  \begin{align*}
    \TC_r[X\to B]&=\TC_r[C\hookrightarrow C\otimes \Lambda V]\\
    &\le \TC_r[C\hookrightarrow C\otimes \Lambda \widehat{V}] + (r-1) = \TC_r[\widehat{X}\to B] + (r-1).
  \end{align*}

  For general cases, one can inductively apply the above argument because of the nilpotence condition of the relative Sullivan algebra, completing the proof.
\end{proof}

We next provide a sufficient condition for a rational fibration $F\to X\to B$ to admit an odd-degree extension, i.e., Theorem \ref{main 1} is applicable.
\begin{definition}
  \cite{HRV2}
  For a minimal model $\Lambda V$ of an elliptic space, we call an extension $\Lambda \widehat{V}\hookrightarrow \Lambda V$ as an \emph{$F_0$-basis extension} if ${\widehat{V}}^{\mathrm{even}} = V^{\mathrm{even}}$ and $\Lambda \widehat{V}$ is the minimal model of an $F_0$-space.
\end{definition}
\noindent
Let $F\to X\to B$ be a rational fibration such that:

\begin{enumerate}
  \item the minimal model $\Lambda V$ of $F$ admits an $F_0$-basis extension $\Lambda \widehat{V}\hookrightarrow \Lambda V$, and
  \item the fibration $F\to X\to B$ is \emph{pure}; it has a relative Sullivan model
  \[
  (C, d)\hookrightarrow(C\otimes \Lambda V, d)\to(\Lambda V, \overline{d})
  \]
  with $dV^{\mathrm{even}} = 0$ and $dV^{\mathrm{odd}} \subset C\otimes \Lambda V^{\mathrm{even}}$ (cf. \cite{T81}).
\end{enumerate}

\begin{lemma}
  \label{F0 ext}
  If a rational fibration $F\to X\to B$ satisfies the above assumptions, then it admits an odd-degree extension.
\end{lemma}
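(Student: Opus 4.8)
The plan is to manufacture the required odd-degree extension by cutting the fibre $F$ down to the $F_0$-space prescribed by hypothesis~(1), while checking that this truncation leaves the base $B$ untouched. Take $\widehat{F}$ to be the $F_0$-space whose minimal model is $(\Lambda\widehat V,\overline d)$, and define $\widehat X$ as the total space of a suitable fibration over $B$. To set things up I would fix a pure relative Sullivan model $(C,d)\hookrightarrow(C\otimes\Lambda V,d)\to(\Lambda V,\overline d)$ of $F\to X\to B$ as in hypothesis~(2), with $C$ a Sullivan model of $B$, $(\Lambda V,\overline d)$ the minimal model of $F$, $dV^{\mathrm{even}}=0$ and $dV^{\mathrm{odd}}\subset C\otimes\Lambda V^{\mathrm{even}}$. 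Writing $V=\widehat V\oplus W$ as graded vector spaces for the $F_0$-basis extension $\Lambda\widehat V\hookrightarrow\Lambda V$, the condition $\widehat V^{\mathrm{even}}=V^{\mathrm{even}}$ forces $W=W^{\mathrm{odd}}$, and ellipticity of $F$ (implicit in hypothesis~(1)) makes $W$ finite dimensional, say $m=\dim W$.

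The crucial step is to observe that purity turns $C\otimes\Lambda\widehat V$ into a differential subalgebra of $C\otimes\Lambda V$: since $d$ is a $C$-linear derivation with $dV^{\mathrm{even}}=0$ and $dV^{\mathrm{odd}}\subset C\otimes\Lambda V^{\mathrm{even}}=C\otimes\Lambda\widehat V^{\mathrm{even}}$, one gets $d(C\otimes\Lambda\widehat V)\subseteq C\otimes\Lambda\widehat V$. Listing a homogeneous basis of $\widehat V$ with the even generators first then exhibits $(C,d)\hookrightarrow(C\otimes\Lambda\widehat V,d)$ as a relative Sullivan algebra; its quotient cdga $(\Lambda\widehat V,\overline d)$ is, by hypothesis~(1), the minimal model of the $F_0$-space $\widehat F$. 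Realizing, $\widehat F\to\widehat X\to B$ is a fibration of nilpotent spaces of finite rational type.

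It remains to assemble the diagram and identify the kernel. The identity on $C$, the inclusion $C\otimes\Lambda\widehat V\hookrightarrow C\otimes\Lambda V$, and the inclusion $\Lambda\widehat V\hookrightarrow\Lambda V$ form a strictly commutative ladder of cdga morphisms between the two relative Sullivan models; applying spatial realization (and using that these algebras model the spaces in question) produces a homotopy commutative diagram of the shape required for an odd-degree extension, with identity on $B$ and with lower row the fibration constructed above. Finally, the map $F\to\widehat F$ is modelled by the inclusion $\Lambda\widehat V\hookrightarrow\Lambda V$, so on rational homotopy groups $\pi_\ast(F)\otimes\Q\to\pi_\ast(\widehat F)\otimes\Q$ is the linear dual of $\widehat V\hookrightarrow V$; its kernel is therefore $W^{\vee}$, which is finite dimensional and concentrated in odd degrees --- exactly the data an odd-degree extension demands, with $m=\dim W$ the integer occurring in Theorem~\ref{main 1}.

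The only real obstacle is the subalgebra claim of the second paragraph, and it is there that both hypotheses pull their weight simultaneously: \emph{pure} keeps the differentials of the surviving generators inside $C\otimes\Lambda\widehat V$, while the defining equality $\widehat V^{\mathrm{even}}=V^{\mathrm{even}}$ of an \emph{$F_0$-basis extension} is what both identifies $C\otimes\Lambda V^{\mathrm{even}}$ with $C\otimes\Lambda\widehat V^{\mathrm{even}}$ and guarantees that the discarded generators sit in odd degrees. One should also confirm that $\widehat X$ is nilpotent of finite rational type, but this follows at once from finiteness of $C\otimes\Lambda\widehat V$ together with nilpotence of $B$ and of $\widehat F$.
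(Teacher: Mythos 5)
Your proof is correct and follows essentially the same route as the paper: use purity together with $\widehat{V}^{\mathrm{even}}=V^{\mathrm{even}}$ to see that $d$ preserves $C\otimes\Lambda\widehat{V}$, so that $C\hookrightarrow C\otimes\Lambda\widehat{V}$ is a relative Sullivan algebra, then realize the resulting ladder of models and note that the complementary space $W$ with $V=\widehat{V}\oplus W$ is finite dimensional and odd, giving the required kernel on rational homotopy. The only cosmetic difference is that the paper records the odd, finite-dimensional complement via the extra row $\Q\to\Lambda W\to\Lambda W$ in its diagram, whereas you read off the kernel directly from the linear part of $\Lambda\widehat{V}\hookrightarrow\Lambda V$; these amount to the same verification.
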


\begin{proof}
    We first show that $C\hookrightarrow C\otimes\Lambda\widehat{V}$ admits a structure of a relative Sullivan model. For $c\otimes 1\in C\otimes\Lambda\widehat{V}$, we clearly have $dc\otimes 1\in C\otimes\Lambda\widehat{V}$. Then we consider a homogeneous element $1\otimes v\in C\otimes\Lambda\widehat{V}$. If $|v|$ is even, then $d(1\otimes v)=0$ by the pureness of the model. If $|v|$ is odd, then we have
    \[
    d(1\otimes v) \in C\otimes \Lambda V^{\mathrm{even}} = C\otimes \Lambda \widehat{V}^{\mathrm{even}}
    \]
    Thus for any element $x \in C\otimes \Lambda\widehat{V}$, we get $dx\in C\otimes \Lambda\widehat{V}$.
    
    Let $W$ denote a subspace of $V$ such that $V = \widehat{V}\oplus W$. We now have a commutative diagram
    \[
    \xymatrix{
        C \ar@{^{(}->}[r]\ar@{=}[d]& C\otimes\Lambda\widehat{V} \ar@{->>}[r]\ar@{^{(}->}[d]& \Lambda\widehat{V} \ar@{^{(}->}[d]\\
        C \ar@{^{(}->}[r]\ar@{->>}[d]& C\otimes\Lambda V \ar@{->>}[r]\ar@{->>}[d]& \Lambda V \ar@{->>}[d]\\
        \Q \ar[r]& \Lambda W \ar@{=}[r]& \Lambda W,
    }
    \]
    where all columns and rows in the top left square are relative Sullivan algebras. Recall that $W$ is of finite dimension and concentrated in odd degrees. Thus, by the spatial realization of the above diagram, the statement is proved.
\end{proof}

We now combine Theorems \ref{PTC nil ker} and \ref{main 1}. Let $F\to X\to B$ be a rational fibration such that:

\begin{enumerate}
  \item $B$ is formal and $F$ is elliptic formal,
  \item the minimal model $\Lambda V$ of $F$ admits an $F_0$-basis extension $\Lambda \widehat{V}\hookrightarrow \Lambda V$, and
  \item there is a \emph{pure} relative Sullivan model $C\hookrightarrow C\otimes\Lambda V\to \Lambda V$.
\end{enumerate}

\begin{corollary}
  \label{main 2}
  If a rational fibration $F\to X\to B$ satisfies the above assumptions, then there is an inequality
  \[
  \TC_r[X\to B] \le \zcl_r[\widehat{X}\to B] + m(r-1),
  \]
  where $m$ denotes the codimension of $\widehat{V}$ in $V$.
\end{corollary}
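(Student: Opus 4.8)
The plan is to chain together the two main results that have just been proved, Theorem~\ref{PTC nil ker} and Theorem~\ref{main 1}, using Lemma~\ref{F0 ext} as the bridge that guarantees Theorem~\ref{main 1} is applicable. So the first step is to invoke Lemma~\ref{F0 ext}: assumptions (2) and (3) say precisely that $\Lambda V$ admits an $F_0$-basis extension $\Lambda\widehat V\hookrightarrow\Lambda V$ and that the fibration is pure, which are the two hypotheses of that lemma. Hence the given fibration $F\to X\to B$ is an odd-degree extension of some fibration $\widehat F\to\widehat X\to B$, where the kernel of $\pi_*(F)\to\pi_*(\widehat F)$ is $W$, the complement of $\widehat V$ in $V$, of dimension $m$. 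By Theorem~\ref{main 1} we then immediately get
\[
\TC_r[X\to B]\le \TC_r[\widehat X\to B]+m(r-1).
\]

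The second step is to rewrite $\TC_r[\widehat X\to B]$ as $\zcl_r[\widehat X\to B]$ via Theorem~\ref{PTC nil ker}, so I must check that the fibration $\widehat F\to\widehat X\to B$ satisfies that theorem's hypotheses. The base $B$ is formal by assumption~(1). The fiber $\widehat F$ has minimal model $\Lambda\widehat V$, which is by definition the minimal model of an $F_0$-space; $F_0$-spaces are elliptic and (as noted in the paragraph following Conjecture~\ref{H conj}) formal, so $\widehat F$ is elliptic and formal. The remaining point is that $\widehat F\to\widehat X\to B$ is TNCZ: but any fibration with $F_0$-space fiber that satisfies Halperin's conjecture is TNCZ, and more to the point, the construction in Lemma~\ref{F0 ext} produces $C\hookrightarrow C\otimes\Lambda\widehat V$ as a \emph{pure} relative Sullivan model with $d\widehat V^{\mathrm{even}}=0$; for a pure model over an $F_0$-space fiber one knows (this is exactly Halperin's conjecture being true for pure fibrations, a classical fact — see the references around Conjecture~\ref{H conj}) that the fibration is TNCZ. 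Granting this, Theorem~\ref{PTC nil ker} gives $\TC_r[\widehat X\to B]=\zcl_r[\widehat X\to B]$, and substituting into the inequality above yields the claim.

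The main obstacle I anticipate is precisely the verification that $\widehat F\to\widehat X\to B$ is TNCZ, since Halperin's conjecture is open in general; the resolution is that the restricted model is \emph{pure}, and Halperin's conjecture is a theorem in the pure case, so no open problem is actually invoked. A secondary, purely bookkeeping point is to confirm that the complement $W$ of $\widehat V$ in $V$ is finite-dimensional and concentrated in odd degrees — this is immediate because $\widehat V^{\mathrm{even}}=V^{\mathrm{even}}$ by the definition of $F_0$-basis extension and $F$ is elliptic so $V$ is finite-dimensional, which is what lets us identify $m=\dim W$ with the codimension of $\widehat V$ in $V$ and apply Theorem~\ref{main 1} with exactly this $m$. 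Once these two points are in place the corollary follows by a one-line composition of inequalities.
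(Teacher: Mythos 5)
Your proposal is correct and follows essentially the same route as the paper: Lemma~\ref{F0 ext} to produce the odd-degree extension, Theorem~\ref{main 1} for the $m(r-1)$ term, and then verification that $\widehat F\to\widehat X\to B$ has formal base, elliptic formal ($F_0$) fiber and is TNCZ because the restricted model is pure (the paper cites \cite[Theorem 2]{T81} for exactly this pure-case instance of Halperin's conjecture), so that Theorem~\ref{PTC nil ker} converts $\TC_r[\widehat X\to B]$ into $\zcl_r[\widehat X\to B]$.
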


\begin{proof}
  By Lemma \ref{F0 ext}, one can assume that $F\to X\to B$ is an odd-degree extension of a rational fibration, say, $\widehat{F}\to \widehat{X}\to B$. Then by Theorem \ref{main 1}, we have
  \[
  \TC_r[X\to B] \le \TC_r[\widehat{X}\to B] + m(r-1).
  \]
  Remark that $\widehat{F}$ is formal and elliptic, $B$ is formal and $\widehat{F} \to \widehat{X} \to B$ is also a pure fibration. Then by \cite[Theorem 2]{T81}, the fibration $\widehat{F} \to \widehat{X} \to B$ is TNCZ. Thus we can apply Theorem \ref{PTC nil ker}, completing the proof.
\end{proof}

\begin{example}
  Let $V_q(\R^{m+q})$ denote the space of the orthonormal $q$-frames in $\R^{m+q}$. Then the projection of the first vector admits a structure of a $2$-frame bundle
  \[
  V_2(\R^{2n+2}) \to V_3(\R^{2n+3}) \xrightarrow{p} S^{2n+2}.
  \]
  We denote its rationalization by $F\to X\xrightarrow{p} B$.

  We show that, for $n\ge2$, there is an identity
  \begin{equation}
    \label{E4}
    \TC_2[X\to B] = 3.
  \end{equation}
  Recall that $F$ has the same homotopy type as $(S^{2n}\times S^{2n+1})_\Q$, and so we get
  \[
  3 = \TC((S^{2n}\times S^{2n+1})_\Q) = \TC(F) \le \TC_2[X\to B].
  \]
  We now construct a relative Sullivan model of $F\to X\to B$. Recall that there is a commutative diagram
  \[
  \xymatrix{
  S^{2n} \ar[r]\ar@{=}[d]& V_2(\R^{2n+2}) \ar[r]\ar[d]& S^{2n+1}\ar[d]\\
  S^{2n} \ar[r]\ar[d]& V_3(\R^{2n+3}) \ar[r]\ar[d]^{p}& V_2(\R^{2n+3})\ar[d]^{p'}\\
  \ast \ar[r]& S^{2n+2} \ar@{=}[r]& S^{2n+2}
  }
  \]
  such that all columns and rows are fiber bundles, where $p'$ is the projection of first vector. Since the right column is a sphere bundle of the Euler characteristic $2$, it is modelled as
  \begin{gather*}
    \Lambda(a,b) \hookrightarrow \Lambda(a,b,y) \to \Lambda(y),\\
    |a|=2n+2, |b|=4n+3, |y|=2n+1,\\
    da=dx=0, db=a^2, dy=2a.
  \end{gather*}
  Then by degree reasons, we get a relative Sullivan model of $F\to X\to B$ by
  \begin{gather*}
    \Lambda(a,b) \hookrightarrow \Lambda(a,b,x,y,z) \to \Lambda(x,y,z),\\
    |a|=2n+2, |b|=4n+3, |x|=2n, |y|=2n+1, |z|=4n-1,\\
    da=dx=0, db=a^2, dy=2a, dz=x^2.
  \end{gather*}
  One can easily confirm that the rational fibration $F\to X\to B$ satisfies all assumptions in Corollary \ref{main 2}; then it is an odd-degree extension of a rational fibration $\widehat{F}\to \widehat{X}\to B$ which has a relative Sullivan model
  \[
  \Lambda(a,b) \hookrightarrow \Lambda(a,b,x,z) \to \Lambda(x,z).
  \]
  Remark that this model is trivial; then we get $\TC_2[\widehat{X}\to B]= \zcl_2[\widehat{X}\to B] = 2$. Thus we obtain
  \[
  \TC_2[X\to B] \le \zcl_2[\widehat{X}\to B] + 1(2-1) = 3,
  \]
  and so \eqref{E4}.
\end{example}

\begin{remark}
  In the above example, Corollary \ref{main 2} provides a sharper upper bound than those previously known, such as the rational category of $X^2_B$, which is $4$, or that given in \cite[Lemma 1]{FKS}.
\end{remark}


\section{The fiber concentrated in odd degrees}\label{Odd fiber}

In this section, we focus on a rational fibration $F\to X\to B$ of finite rational type such that $F$ is elliptic and concentrated in odd degrees, and prove Theorem \ref{main 2}. We then see that Theorem \ref{main 2} is in connection with other results on the integral sequential parametrized topological complexity.

We first recall the following lemma, which is a generalization of \cite[Theorem 1.4]{JMP}.

\begin{lemma}
  \label{TC odd fiber}
  \cite[Theorem 1.4]{KY}
  For $F$ above, there are identities
  \[
  \TC_r(F) = (r-1)\cat(F) = (r-1)\dim(\pi_\mathrm{odd}(F)).
  \]
\end{lemma}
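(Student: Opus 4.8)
The plan is to prove the two asserted equalities by combining three facts: the computation $\cat(F)=m$, where $m=\dim\pi_{\mathrm{odd}}(F)$; the upper bound $\TC_r(F)\le (r-1)m$; and the lower bound $\TC_r(F)\ge (r-1)\cat(F)$, together with the additivity $\cat(F^{r-1})=(r-1)\cat(F)$ of rational LS category over products. Observe first that ellipticity together with $\pi_{\mathrm{even}}(F)=0$ means exactly that the minimal Sullivan model of $F$ is $(\Lambda V,d)$ with $V=V^{\mathrm{odd}}$ finite dimensional, $\dim V=m$, and, by minimality, $dV\subset\Lambda^{\ge 2}V$; in particular $\Lambda V$ is an exterior algebra and $\Lambda^{>m}V=0$.

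I would first compute $\cat(F)=m$. The bound $\cat(F)\le m$ is immediate: since $\Lambda^{>m}V=0$, the projection $\Lambda V\to\Lambda V/\Lambda^{>m}V$ is the identity, so the Sullivan-model description of rational category gives $\cat(F)\le m$. For the reverse inequality I would exhibit a nonzero cohomology class represented by a cocycle of word length $m$, which witnesses $e_0(F)\ge m$ for the Toomer invariant and hence $\cat(F)\ge m$. The natural candidate is the product $v_1\cdots v_m$ of a basis of $V$: it is a cocycle because $d(v_1\cdots v_m)$ lies in $\Lambda^{\ge m+1}V=0$, and it is not a coboundary because $v_1\cdots v_m$ can never appear in $d\eta$ --- this would force some $dv_i$ to contain a monomial involving $v_i$ itself, which is impossible on degree grounds since $F$ is simply connected and hence has no generator in degree $1$. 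Thus $\cat(F)=m$.

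For the upper bound, Proposition \ref{PTC quot} with $B=\ast$ (hence $C=\Q$) says $\TC_r(F)$ is the least $k$ for which $\rho_k\colon (\Lambda V)^{\otimes r}\to (\Lambda V)^{\otimes r}/(\mathrm{Ker}\,\mu_r)^{k+1}$ admits a homotopy retraction. After the change of variables $(\Lambda V)^{\otimes r}=\Lambda\big(V^{(1)}\oplus\bigoplus_{j=2}^{r}\widetilde{V}^{(j)}\big)$ with $\widetilde{V}^{(j)}=\{v^{(j)}-v^{(1)}\mid v\in V\}$, the multiplication $\mu_r$ is evaluation at $\widetilde{V}^{(2)}\oplus\cdots\oplus\widetilde{V}^{(r)}=0$, so $\mathrm{Ker}\,\mu_r$ is the ideal generated by these $(r-1)m$ elements, all of odd degree. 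Since they span part of an exterior algebra, any product of $(r-1)m+1$ elements of the ideal vanishes, so $(\mathrm{Ker}\,\mu_r)^{(r-1)m+1}=0$; then $\rho_{(r-1)m}$ is the identity and $\TC_r(F)\le (r-1)m$. For the matching lower bound, take the fibration $\Pi_r$ for $F\to F\to\ast$ and restrict it to the subspace $F^{r-1}\times\{\ast\}\subset F^r$ obtained by fixing the last marked point at the basepoint: the restricted fibration has contractible total space (a based path space of $F$) and base $F^{r-1}$, so its sectional category equals $\cat(F^{r-1})$. Since sectional category does not increase under restriction to a subspace, $\TC_r(F)=\secat(\Pi_r)\ge\cat(F^{r-1})=(r-1)\cat(F)$. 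Combining the three facts yields $\TC_r(F)=(r-1)\cat(F)=(r-1)m$.

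I expect the main obstacle to be the computation $\cat(F)=m$, and within it the verification that $v_1\cdots v_m$ is not a coboundary; this is the one step where simple connectivity of $F$ genuinely enters, ruling out degree-one generators that could otherwise let $dv_i$ involve $v_i$. The two bounds on $\TC_r(F)$ are then formal: one is the nilpotency computation above combined with Proposition \ref{PTC quot}, and the other is a standard restriction argument for sectional category together with the additivity of rational LS category over products. (Alternatively, the lower bound can be obtained inside the algebraic framework by checking that the analogous top product $\prod_{i,j}\bigl(v_i^{(j)}-v_i^{(1)}\bigr)$ is a non-bounding cocycle lying in $(\mathrm{Ker}\,\mu_r)^{(r-1)m}$, which forces $\HTC_r[F\to\ast]\ge (r-1)m$ by Definition \ref{MTC and HTC} and \eqref{PTC ineqs}.)
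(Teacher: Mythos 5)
The paper gives no proof of this lemma: it is quoted directly from Kishimoto--Yamaguchi \cite[Theorem 1.4]{KY}. Your argument is a correct, essentially self-contained proof along the expected lines: $\cat(F)=m$ via the Toomer-type argument with the top word $v_1\cdots v_m$; the upper bound $\TC_r(F)\le(r-1)m$ from Proposition \ref{PTC quot} with $C=\Q$ together with the nilpotency of $\mathrm{Ker}\,\mu_r$, an ideal generated by $(r-1)m$ odd-degree elements, so that its $((r-1)m+1)$-st power vanishes by graded commutativity and the pigeonhole principle; and the lower bound $\TC_r(F)\ge\cat(F^{r-1})$ by restricting $\Pi_r$ over $F^{r-1}\times\{\ast\}$, where the total space becomes a contractible based path space. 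Two points should be adjusted to match the paper's hypotheses. First, $F$ is only assumed nilpotent, not simply connected, so degree-one generators are allowed (tori and nilmanifolds are legitimate instances of the lemma); the key claim that no monomial of $dv_i$ involves $v_i$ should therefore be justified by the Sullivan nilpotence filtration $V=\bigcup_k V(k)$ with $d\,V(k)\subset\Lambda V(k-1)$, rather than by the absence of degree-one generators---the conclusion is the same, so this is a local repair. Second, the additivity $\cat(F^{r-1})=(r-1)\cat(F)$ of rational category (F\'elix--Halperin--Lemaire) is usually stated for simply connected spaces; you can avoid invoking it by running your Toomer argument directly on $(\Lambda V)^{\otimes(r-1)}$, an exterior algebra on $(r-1)m$ odd generators whose top word $(v_1\cdots v_m)^{\otimes(r-1)}$ is a non-bounding cocycle by the K\"unneth theorem, which gives $\cat(F^{r-1})=(r-1)m$ outright. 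Your parenthetical alternative via $\HTC_r$ and \eqref{PTC ineqs} also works, but the non-bounding of $\prod_{i,j}\bigl(v_i^{(j)}-v_i^{(1)}\bigr)$ needs one extra observation, e.g.\ that its image under the augmentation of the first tensor factor is $\pm(v_1\cdots v_m)^{\otimes(r-1)}$, which is nonzero in cohomology.
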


\begin{proof}[Proof of Theorem \ref{main2}]
  Let $\mathcal{M}\hookrightarrow \mathcal{M}\otimes \Lambda V\to \Lambda V$ be a relative minimal Sullivan model of $F\to X\to B$. Then since $\Lambda V$ is the minimal model of $F$, the graded vector space $V$ is of finite dimension and concentrated in odd degrees. Furthermore, there is a commutative digram
  \[
  \xymatrix{
    \mathcal{M} \ar[r] \ar@{=}[d] & \mathcal{M}\otimes\Q \ar[r] \ar@{^{(}-{>}}[d] & \Q\ar@{^{(}->}[d] \\
    \mathcal{M} \ar[r]  & \mathcal{M}\otimes\Lambda V \ar[r] & \Lambda V
  }
  \]
  where the columns are relative Sullivan algebras. Then by Theorem \ref{main 1}, we get
  \[
  \TC_r[X\to B] \le \TC_r[id\colon X\to X]+(r-1)\dim(V) = (r-1)\dim(\pi_\mathrm{odd}(F)).
  \]
  On the other hand, we have
  \[
  \TC_r[X\to B] \ge  \TC_r(F) = (r-1)\dim(\pi_\mathrm{odd}(F))
  \]
  by Lemmas \ref{bounds 1} and \ref{TC odd fiber}. Thus the proof is complete.
\end{proof}

\begin{remark}
  The upper bound in the above proof can also be obtained from the nilpotency of the ideal $\mathrm{Ker}\,(id\otimes \mu_r)$ in \eqref{diag model}. For detail, see \cite[Proposition 21]{CV15}.
\end{remark}

Theorem \ref{main2} can be interpreted in several contexts. First, it is a rational analogue of the result of Farber and Paul on the sequential parametrized topological complexity of a principal $G$-bundle $X\to B$, where $G$ is a connected topological group.

\begin{proposition}
  \cite[Proposition 3.3]{FP}
  There are identities
  \[
  \TC_r[X\to B] = \cat(G^{r-1}) = \TC_r(G).
  \]
\end{proposition}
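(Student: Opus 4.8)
The plan is to reduce everything to the identity \eqref{PTC Delta} and to the elementary fact that for a principal bundle the fiberwise product trivializes. First I would record the homeomorphism
\[
X^r_B\;\xrightarrow{\;\cong\;}\;X\times G^{r-1},\qquad (x_1,\dots,x_r)\longmapsto(x_1,g_2,\dots,g_r),
\]
where $g_i\in G$ is the unique element with $x_i=x_1g_i$; continuity of this map and of its inverse $(x,g_2,\dots,g_r)\mapsto(x,xg_2,\dots,xg_r)$ follows from local triviality of the principal $G$-bundle $p$. Under this identification the fiberwise diagonal $\Delta_r\colon X\to X^r_B$ becomes the inclusion $\iota\colon X=X\times\{(e,\dots,e)\}\hookrightarrow X\times G^{r-1}$, so by \eqref{PTC Delta} and the invariance of $\secat$ under equivalence of maps (Section \ref{Secat}) we get $\TC_r[X\to B]=\secat(\iota)$.

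Next I would compute $\secat(\iota)$. A homotopy right inverse of $\iota$ over an open set $U\subseteq X\times G^{r-1}$ is precisely a nullhomotopy of the second projection $\mathrm{pr}_2|_U\colon U\to G^{r-1}$: one may always take $\mathrm{pr}_1|_U$ as the retraction on the first coordinate, and the remaining condition, on the second coordinate, is that $\mathrm{pr}_2|_U$ be nullhomotopic. Hence $\secat(\iota)$ is the least $k$ for which $X\times G^{r-1}$ admits an open cover $U_0,\dots,U_k$ with every $\mathrm{pr}_2|_{U_i}$ nullhomotopic. Since $X\neq\emptyset$, pulling back a categorical open cover of $G^{r-1}$ along $\mathrm{pr}_2$ yields such a cover, and conversely restricting any such cover of $X\times G^{r-1}$ to a slice $\{x_0\}\times G^{r-1}\cong G^{r-1}$ produces a categorical cover of $G^{r-1}$; therefore $\secat(\iota)=\cat(G^{r-1})$, and so $\TC_r[X\to B]=\cat(G^{r-1})$. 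Applying the same three steps to the trivial principal bundle $G\to\ast$ (so that $G^r_\ast=G^r\cong G\times G^{r-1}$ and the diagonal again becomes the standard inclusion) gives $\TC_r(G)=\cat(G^{r-1})$, and the two identities combine to the statement.

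I expect no serious obstacle: the whole content sits in the first step, and the only place where the group structure is genuinely used is in checking that the chosen trivialization of $X^r_B$ carries $\Delta_r$ to the standard inclusion $\iota$. The point that needs a little care is the reduction $\secat(\iota)=\cat(G^{r-1})$ — that the sectional category of the inclusion $X\hookrightarrow X\times G^{r-1}$ is measured by the complementary projection — but this is routine once one unwinds the definition of a local homotopy section, as sketched above.
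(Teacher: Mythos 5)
Your argument is correct, and it is worth noting that the paper itself gives no proof of this statement: it is quoted from Farber--Paul \cite[Proposition 3.3]{FP}, so there is no internal proof to compare against. Your route is essentially the standard one (and, up to presentation, the one in \cite{FP}): trivialize the fiberwise product via the translation map of the principal bundle, $X^r_B\cong X\times G^{r-1}$, observe that under this homeomorphism the fiberwise diagonal becomes the slice inclusion $X\times\{(e,\dots,e)\}\hookrightarrow X\times G^{r-1}$, and then use \eqref{PTC Delta} together with the invariance of $\secat$ recorded in Section \ref{Secat}. Your identification of $\secat$ of the slice inclusion with $\cat(G^{r-1})$ is sound in both directions: existence of a local homotopy section over $U$ forces $\mathrm{pr}_2|_U$ to be nullhomotopic (compose the homotopy with $\mathrm{pr}_2$), and conversely $\mathrm{pr}_1|_U$ serves as a section once $\mathrm{pr}_2|_U$ is nullhomotopic; pulling back a categorical cover of $G^{r-1}$ gives the upper bound and restricting to a slice $\{x_0\}\times G^{r-1}$ gives the lower bound. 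Two small points deserve explicit mention to make the write-up airtight: continuity of the division map $(x_1,x_i)\mapsto g_i$ is exactly the continuity of the translation function of a locally trivial principal bundle, as you say; and in the converse step you need the nullhomotopy of $\mathrm{pr}_2|_U$ to end at the identity $(e,\dots,e)$ rather than at an arbitrary constant, which uses path-connectedness of $G$ --- this is automatic in the paper's setting, since the standing hypothesis on $p\colon X\to B$ is that its fiber is path-connected. The case $\TC_r(G)=\cat(G^{r-1})$ obtained by specializing to $G\to\ast$ is likewise consistent with the paper's usage $\TC_r[X\to\ast]=\TC_r(X)$.
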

\noindent
Since an elliptic $H$-space has the minimal model $(\Lambda V, 0)$ with $V$ of finite dimension and concentrated in odd degrees, the rationalized version of the above proposition is the special case of Theorem \ref{main2}.

The theorem can also be related to other upper bounds of the parametrized topological complexity of spherical fibrations. Let $X\to B$ be the unit sphere bundle of a real vector bundle of rank $n+1$, and $\widetilde{X}\to B$ be the bundle of orthonormal $2$-frames. Then there is a sphere bundle $S^{n-1}\to\widetilde{X}\to X$.

\begin{proposition}
  \cite[Theorem 14]{FW1}
  \label{upper bound FW}
  There is an inequality
  \[
    \TC_2[X\to B]\le\mathsf{secat}(\widetilde{X}\to X)+1.
  \]
\end{proposition}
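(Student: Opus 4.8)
The plan is to realize the $2$-frame bundle $\widetilde{X}\to X$ as the pullback, along the fiberwise diagonal $\Delta_2\colon X\to X^2_B$, of a natural fibration over $X^2_B$ whose sectional category is small. Concretely, $X^2_B$ is the bundle of ordered pairs of unit vectors in a rank-$(n+1)$ bundle, and over the open set $U_{\neq}\subset X^2_B$ of pairs $(v,w)$ with $v\neq w$ one has a canonical way to produce a second orthonormal frame vector, e.g.\ by Gram--Schmidt applied to $w$ against $v$; over the complementary region one needs a second chart. First I would make this precise by covering $X^2_B$ with open sets pulled back from $X$ under $\widetilde{X}\to X$ having local sections, together with the ``antidiagonal'' chart. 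The key point is that $\widetilde{X}\to X$ has a fibrewise join decomposition: $\mathsf{secat}(\widetilde{X}\to X)=k$ means $X$ is covered by $k+1$ open sets over which the $S^{n-1}$-bundle $\widetilde{X}\to X$ has sections, and pulling these back along either projection $p_i\colon X^2_B\to X$ gives open sets of $X^2_B$ over which the fibration $\Pi_2\colon X^I_B\to X^2_B$ (equivalently $\Delta_2$) admits a local section, provided one also handles the locus where $v=w$.

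The main technical step, which I expect to be the obstacle, is the gluing: one must produce a \emph{global} extra open set $W\subset X^2_B$ containing the fibrewise diagonal $\Delta_2(X)$ over which $\Pi_2$ has a section, and arrange that $W$ together with the $\mathsf{secat}(\widetilde{X}\to X)+1$ pulled-back sets covers all of $X^2_B$. The set $W$ should be a fibrewise tubular neighbourhood of the diagonal: for $(v,w)$ close to the diagonal one moves $v$ to $w$ along the short geodesic in the fiber sphere, which gives a path in $\mathcal{P}_BX$, i.e.\ a section of $\Pi_2$ over $W$. The remaining region $X^2_B\setminus W$ retracts fibrewise onto the pairs $(v,w)$ with $v,w$ at distance $\ge\varepsilon$; over such pairs the pair $(v,\,\text{Gram--Schmidt of }w)$ defines a section $X^2_B\setminus W\to\widetilde{X}$ of the $2$-frame bundle pulled back via $p_1$, so a local section of $\Pi_2$ over $X^2_B\setminus W$ exists exactly when $\widetilde{X}\to X$ does, and hence $X^2_B\setminus W$ is covered by $\mathsf{secat}(\widetilde{X}\to X)+1$ further open sets with sections.

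Putting these together, $X^2_B$ is covered by $W$ plus $\mathsf{secat}(\widetilde{X}\to X)+1$ open sets over each of which $\Pi_2$ admits a local section, hence
\[
\TC_2[X\to B]=\mathsf{secat}(\Pi_2)\le \bigl(\mathsf{secat}(\widetilde{X}\to X)+1\bigr)+1-1=\mathsf{secat}(\widetilde{X}\to X)+1,
\]
using \eqref{PTC Delta} to identify $\TC_2[X\to B]$ with $\mathsf{secat}(\Pi_2)$. A cleaner alternative, which I would present if the tube-versus-Gram--Schmidt bookkeeping gets heavy, is to phrase everything in terms of the sectional-category ``subordination'' inequality $\secat(\Delta_2)\le \secat(q\circ\Delta_2)+\secat(\text{fibre of }q)$ applied to the map $q\colon\widetilde{X}\to X$ viewed appropriately over $X^2_B$; the honest work, in either presentation, is verifying that the diagonal tube contributes exactly one extra open set, which is where the ``$+1$'' comes from.
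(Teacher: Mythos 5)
You have put the difficulty in the wrong place, and this is a genuine gap rather than a presentational issue. (The paper itself gives no proof of this proposition; it is quoted from Farber--Weinberger, so I compare with their argument.) The shortest-geodesic motion planner in the sphere fibers is defined and continuous on the single open set $A=\{(u,v)\in X^2_B : v\neq -u\}$ of \emph{non-antipodal} pairs, which already contains the whole fibrewise diagonal; no frame bundle is needed there. The only obstruction is the antipodal locus $\{(u,-u)\}$, and that is exactly where the sections of $\widetilde{X}\to X$ must be used. Your construction does the opposite: it spends the frame-bundle sections on $X^2_B\setminus W$, the complement of a diagonal tube, via ``Gram--Schmidt of $w$ against $v$''. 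But Gram--Schmidt of $w$ against $v$ is undefined precisely when $w=-v$, and these antipodal pairs lie in $X^2_B\setminus W$; so your claimed local sections over the complement fail exactly at the locus the theorem is about. Moreover, producing an orthonormal $2$-frame at $v$ is not the same as producing a section of $\Pi_2$: one needs a path in the fiber sphere from $u$ to $v$ varying continuously with the pair, and you never explain how the frame vector yields such a path where the geodesic is unavailable (when it is available, the frame is superfluous). The closing ``subordination inequality'' $\secat(\Delta_2)\le\secat(q\circ\Delta_2)+\secat(\text{fibre of }q)$ is not a standard inequality and is not justified.

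The correct bookkeeping, as in Farber--Weinberger, is: let $k=\secat(\widetilde{X}\to X)$ and let $U_0,\dots,U_k\subset X$ be open sets carrying unit vector fields $w_i(u)\perp u$. Take the chart $A$ above with the geodesic section, and for each $i$ the chart $A_i=\{(u,v): u\in U_i,\ v\neq u\}$ with the section that first rotates $u$ to $-u$ through $w_i(u)$ in the plane they span, and then follows the shortest geodesic from $-u$ to $v$, which exists and is continuous because $v\neq u$. Every pair $(u,v)$ lies in $A$ or, if $v=-u$, in some $A_i$ (since then $v\neq u$), so these $k+2$ open sets cover $X^2_B$ and $\TC_2[X\to B]=\secat(\Pi_2)\le k+1$. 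Thus the ``$+1$'' comes from the single non-antipodal chart, not from a tube around the diagonal; your proposal would need to be rebuilt along these lines before the count you perform at the end is meaningful.
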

\noindent
In \cite{M}, the author generalized the above upper bound and obtained:

\begin{proposition}
  \cite[Proposition 2.7]{M}
  Let $S^{2n+1}\to X\to\Sigma B$ be a homotopy fibration with $B$ connected. Then There is an inequality
  \[
  \TC_2[X\to \Sigma B]\le 2.
  \]
\end{proposition}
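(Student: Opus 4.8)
The statement to prove is $\TC_2[X \to \Sigma B] \le 2$ for a homotopy fibration $S^{2n+1} \to X \to \Sigma B$ with $B$ connected. The plan is to exploit two structural facts simultaneously: first, that the base $\Sigma B$ is a suspension, so all cup products of positive-degree classes in $H^*(\Sigma B)$ vanish; and second, that the fiber $S^{2n+1}$ is an odd sphere, which is the simplest case of a fiber that is elliptic and concentrated in odd degrees, so Theorem~\ref{main2} is morally relevant even though we want a bound that beats $(r-1)\dim \pi_{\mathrm{odd}}(F) = 1$ only in the wrong direction — here we want an \emph{upper} bound of $2$, which is in fact what Theorem~\ref{main2} gives when the fibration is genuinely twisted, so the real content is that it is never larger. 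Concretely, I would first reduce, via \eqref{PTC Delta}, to estimating $\secat(\Delta_2 \colon X \to X^2_{\Sigma B})$, and then produce an explicit open cover of $X^2_{\Sigma B}$ by three sets over each of which $\Delta_2$ admits a section.

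The cleanest route is geometric rather than algebraic. First I would use the fact that $\Sigma B$ has a cofibration structure: write $\Sigma B = C_+B \cup C_-B$ as the union of two cones glued along $B$, each contractible. Pulling the fibration $X \to \Sigma B$ back over each cone gives a trivializable $S^{2n+1}$-fibration, since a fibration over a contractible base is fiber-homotopy trivial. Over $X^2_{\Sigma B}$ one gets the corresponding decomposition: the subset lying over $C_+B \times C_+B$ (and likewise for $C_-B$) supports a section of $\Delta_2$ because there the fibration looks like a product $S^{2n+1} \times (\text{contractible})$, and $\TC_2(S^{2n+1} \times \ast) = \cat(S^{2n+1}) = 1$ over a contractible base already needs $2$ open sets — so each "diagonal cone block" needs to be split once more. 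This is where the suspension hypothesis re-enters: over the product of cones the relevant motion-planning reduces to ordinary $\TC_2$ of the odd sphere, which is $1$, hence $2$ open sets suffice on each block, and the two blocks overlap in the locus over $B \times B$, which one handles by a Mayer--Vietoris-style count. A careful bookkeeping of how the open sets over $C_+B \times C_+B$ and $C_-B \times C_-B$ fit together — shrinking to keep the total number at $3$ rather than $4$ — is the crux.

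Alternatively, and perhaps more robustly, I would run the rational-model argument: take a relative Sullivan model $(\Lambda W, d) \hookrightarrow (\Lambda W \otimes \Lambda(u), d)$ for the fibration, where $|u| = 2n+1$ is the fiber generator and $(\Lambda W, d)$ is a model for $\Sigma B$, so that $W$ has trivial multiplication in cohomology. Then $\Ker(id \otimes \mu_2)$ inside $(\Lambda W) \otimes (\Lambda(u))^{\otimes 2}$ is generated by $u_1 - u_2$ together with the zero-divisors coming from $W$; since positive-degree products in $H^*(\Sigma B)$ die and $(u_1 - u_2)^2 = 0$ (or, in characteristic zero, since $u$ is odd), one checks that $(\Ker(id\otimes\mu_2))^3$ consists of cocycles that are coboundaries, so the projection $\rho_2$ admits a homotopy retraction by the criterion of Proposition~\ref{PTC quot}. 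Concretely this is essentially the case $r=2$, $C = \Lambda W$, $V = \langle u \rangle$ of the proof of Theorem~\ref{main 1} applied to the trivial odd-degree extension $\ast \to \Sigma B = \Sigma B$, giving $\TC_2[X \to \Sigma B] \le \TC_2[\Sigma B \to \Sigma B] + 1 = 0 + 1$... which is too strong, signalling that the suspension hypothesis must actually be used to control the $W$-part — so the honest bound comes from $\TC_2[\Sigma B \to \Sigma B] = \secat(\Delta_2\colon \Sigma B \to \Sigma B \times \Sigma B)$, and $\TC_2$ of a suspension is at most $1$ only when $\Sigma B$ is a co-H-space with... in fact $\TC_2(\Sigma B)$ can be $2$, which forces the final answer $1 + 1 = 2$.

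\textbf{Main obstacle.} The delicate point is the interaction between the two inputs: the odd-sphere fiber contributes a factor that forces one extra open set beyond $\secat$ of the base diagonal, while the suspension structure of the base caps $\TC_2(\Sigma B)$ (equivalently the nilpotency of zero-divisors of $\Sigma B$) at $1$. Getting the count to land exactly at $2$ rather than $3$ requires showing these two "costs" can be realized on \emph{overlapping} open sets — i.e., that the obstruction cocycle $(u_1 - u_2)(w_1 - w_2)$ for any positive-degree $w$ is already a coboundary, which uses both that $u$ is odd and that $w$ is primitive modulo decomposables in the suspension. I expect the bulk of the work to be this single compatibility verification in the relative Sullivan model, after which Proposition~\ref{PTC quot} closes the argument immediately.
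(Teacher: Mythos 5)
The paper itself gives no proof of this proposition: it is quoted verbatim from \cite{M}, so there is nothing internal to compare against, and your proposal has to stand on its own. It does not, because of a genuine gap of principle: the statement is an \emph{integral} one, about an arbitrary homotopy fibration whose fiber is the actual sphere $S^{2n+1}$ and whose base $\Sigma B$ is only assumed connected, and the Remark immediately following it in the paper (the unit tangent bundle $S^{4n-1}\to T\to S^{4n}$ has $\TC_2[T\to S^{4n}]=2$ while the rationalized fibration has $\TC_2=1$) shows precisely that the integral invariant is not determined by rational data. Consequently your second, Sullivan-model route cannot prove the proposition: at best it bounds $\TC_2$ of the rationalization, and only under nilpotence and finite-type hypotheses that are not assumed here; moreover, for the rationalization Theorem \ref{main2} already gives the exact value $1$, so a rational bound of $2$ carries no content for the stated claim. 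The visible wobble in that paragraph (first extracting $\le 1$ from Theorem \ref{main 1}, then calling it ``too strong'' and adjusting the answer to $2$) is a symptom: there is no coherent algebraic argument there to repair.

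Your geometric route is closer in spirit to what a correct proof must do, but as written it contains an error and omits the essential step. The fiberwise product $X^2_{\Sigma B}$ maps to $\Sigma B$ (both coordinates lie in the same fiber), not to $\Sigma B\times\Sigma B$, so the natural decomposition is into the parts lying over the two cones $C_+B$ and $C_-B$, not over products of cones. Over each cone the fibration is fiber-homotopy trivial, and the restricted sectioning problem for $\Delta_2$ is ordinary $\TC_2(S^{2n+1})=1$, i.e.\ two open sets per cone; a naive union gives four sets, and even the standard subadditivity of sectional category over a union of two pieces only yields $\TC_2[X\to\Sigma B]\le 3$. Reducing to three open sets in total is exactly the content of the proposition, and it requires an additional input — for instance, in the spirit of Proposition \ref{upper bound FW}, bounding $\TC_2$ by $1$ plus the sectional category of an auxiliary fibration over $X$ and then using the suspension structure of the base together with the existence of a nowhere-vanishing tangent field on the odd-dimensional sphere to show that this auxiliary sectional category is at most $1$. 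You explicitly flag this ``shrinking from $4$ to $3$'' as the crux but do not supply it, so the argument is incomplete at its decisive point.
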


\begin{remark}
  Let $S^{4n-1}\to T\to S^{4n}$ be the unit tangent bundle, and let $F\to X\to B$ be its rationalization. In \cite{M}, the author proved that $\TC_2[T\to S^{4n}] = 2$, while we have $\TC_2[X\to B] = 1$ by Theorem \ref{main2}. One may consider that the difference derives from the $2$-component of the Hopf invariant, which vanishes after the rationalization.
\end{remark}


\section{A generalization of the $\TC$-generating function}\label{PTC function}

In this section, we provide two classes of fibrations which support Question \ref{PTC growth}. The first case is a direct consequence of Theorem \ref{main2}, and so the proof is omited.

\begin{corollary}
  Let $F\to X\xrightarrow{p}B$ be a rational fibration of finite rational type such that $F$ is elliptic and concentrated in odd degrees. Then $\mathcal{F}_{p}(z)$ a rational function $\displaystyle \frac{\cat(F)}{(1-z)^2}$.
\end{corollary}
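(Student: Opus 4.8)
The plan is to substitute the values of $\TC_{r+1}[X\to B]$ supplied by Theorem \ref{main2} into the definition of $\mathcal{F}_p$ and recognize the resulting series. First I would observe that, under the stated hypotheses, Theorem \ref{main2} applies verbatim and gives, for every $r\ge 1$,
\[
\TC_{r+1}[X\to B] = \big((r+1)-1\big)\dim(\pi_{\mathrm{odd}}(F)) = r\dim(\pi_{\mathrm{odd}}(F)).
\]
By Lemma \ref{TC odd fiber} we have $\dim(\pi_{\mathrm{odd}}(F)) = \cat(F)$, so that $\TC_{r+1}[X\to B] = r\cat(F)$ for all $r\ge 1$.

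Plugging this into the definition of the generating function yields
\[
\mathcal{F}_p(z) = \sum_{r=1}^{\infty}\TC_{r+1}[X\to B]\,z^{r} = \cat(F)\sum_{r=1}^{\infty} r z^{r} = \cat(F)\cdot\frac{z}{(1-z)^2},
\]
using the elementary identity $\sum_{r\ge 1} r z^{r} = z/(1-z)^2$. Hence $\mathcal{F}_p(z)$ is the rational function $P(z)/(1-z)^2$ with $P(z) = \cat(F)\,z$; in particular $P$ is a polynomial with $P(1) = \cat(F)$, so this class of fibrations gives an affirmative answer to Question \ref{PTC growth}.

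There is no real obstacle here: the entire content is packaged in Theorem \ref{main2} and Lemma \ref{TC odd fiber}, and what remains is a one-line power-series computation. The only point requiring a moment of care is the index shift between $\TC_r$ and the coefficient of $z^{r-1}$ in $\mathcal{F}_p$, which is exactly what produces the factor $z$ in the numerator $P(z)=\cat(F)\,z$.
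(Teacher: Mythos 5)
Your proof is correct and is exactly the argument the paper intends: the corollary is stated there as a direct consequence of Theorem \ref{main2}, with the proof omitted, and your substitution of $\TC_{r+1}[X\to B]=r\cat(F)$ into the series is that argument. Note only that your computation yields $\mathcal{F}_p(z)=\cat(F)\,z/(1-z)^2$, i.e.\ $P(z)=\cat(F)\,z$, which is the precise form (the paper's displayed expression omits the factor $z$), and in either case $P(1)=\cat(F)$, as required by Question \ref{PTC growth}.
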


The second case is a consequence of Theorem \ref{PTC nil ker}, and is related to the Halperin conjecture.

\begin{proposition}
  \label{TC gen even}
  Let $F= S^{2n}_\Q$ or $\C P^n_\Q$ for some $n$, and let $F\to X\xrightarrow{p}B$ a rational fibration such that $B$ is formal. Then $\mathcal{F}_{p}(z)$ is a rational function
  \[
  \frac{P(z)}{(1-z)^2}
  \]
  such that $P(z)$ is a polynomial with
  \[
  P(1)= \cat(F) =
  \begin{cases}
    1 & (F = S^{2n}_\Q)\\
    n & (F = \C P^n_\Q).
  \end{cases}
  \]
\end{proposition}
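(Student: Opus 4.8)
The plan is to apply Theorem \ref{PTC nil ker} to the fibration $F\to X\to B$, which requires verifying that it is totally noncohomologous to zero, that $F$ is elliptic and formal, and that $B$ is formal. Ellipticity and formality of $F=S^{2n}_\Q$ or $\C P^n_\Q$ are classical (these are $F_0$-spaces), and formality of $B$ is a hypothesis. The crucial point is TNCZ: since $F=S^{2n}_\Q$ has minimal model $\Lambda(x,x')$ with $|x|=2n$, $|x'|=4n-1$, $dx=0$, $dx'=x^2$, and $\C P^n_\Q$ has minimal model $\Lambda(x,x')$ with $|x|=2$, $|x'|=2n+1$, $dx=0$, $dx'=x^{n+1}$, both are $F_0$-spaces satisfying $\dim V^{\mathrm{even}}=\dim V^{\mathrm{odd}}=1$. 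For $F=S^{2n}_\Q$ and $\C P^n_\Q$ Halperin's conjecture (Conjecture \ref{H conj}) is known to hold — indeed for any fibration with even-dimensional sphere fiber or complex projective space fiber the Serre spectral sequence argument shows the transgression of the single odd generator is the only possible obstruction and evenness of $\dim V^{\mathrm{even}}=\dim V^{\mathrm{odd}}$ combined with the structure of the rational cohomology of the fiber forces the fibration to be TNCZ. (Concretely, for $S^{2n}$: if the fibration is not TNCZ then the class $x$ transgresses; but then $x^2$ would have to transgress to $0$ forcing a contradiction with the known formula for the rational cohomology; this is exactly the content of the classical results on Halperin's conjecture for these low-rank $F_0$-spaces.) Hence $F\to X\to B$ is TNCZ, and Theorem \ref{PTC nil ker} gives $\TC_r[X\to B]=\zcl_r[X\to B]$ for every $r\ge 2$.

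Next I would compute $\zcl_r[X\to B]$ explicitly. Since the fibration is TNCZ, the rational cohomology Serre spectral sequence collapses and $H^*(X)\cong H^*(B)\otimes H^*(F)$ as an $H^*(B)$-module. Using the relative filtered model of Lemma \ref{filtered}, or directly the collapse of the spectral sequence for $F^r\to X^r_B\to B$, one identifies $H^*(X^r_B)$ with a free $H^*(B)$-module on $H^*(F)^{\otimes r}$, and the kernel of $\Delta_r^*$ is generated over $H^*(B)$ by the zero-divisors $1\otimes\cdots\otimes x\otimes\cdots\otimes 1 - 1\otimes\cdots\otimes x\otimes\cdots\otimes 1$ coming from the cohomology of $F$. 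The nilpotency of this ideal is then a purely algebraic computation in $H^*(F)^{\otimes r}$, which is the same computation that determines $\TC_r(F)=\TC_r(S^{2n}_\Q)$ or $\TC_r(\C P^n_\Q)$; by the results of Farber–Oprea (or Rudyak) this equals $\zcl_r(F)$, and the generating function $\sum_{r\ge 1}\TC_{r+1}(F)z^r$ is known to be of the form $P(z)/(1-z)^2$ with $P(1)=\cat(F)$, where $\cat(S^{2n})=1$ and $\cat(\C P^n)=n$. The key observation is that, since the extra factor $H^*(B)$ is a free module factor carried along naturally by every class in the kernel, $\zcl_r[X\to B]=\zcl_r(F)$ exactly; multiplying zero-divisors by elements of $H^*(B)$ cannot increase the nilpotency because the $H^*(B)$-factors multiply independently and $H^*(B)$ contributes no new zero-divisors of $\Delta_r^*$ (the diagonal is an isomorphism on the $B$-part).

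Combining these, $\mathcal{F}_p(z)=\sum_{r\ge 1}\TC_{r+1}[X\to B]z^r=\sum_{r\ge 1}\zcl_{r+1}(F)z^r=\mathcal{F}_{\mathrm{id}_F}(z)$, which by the cited computation of the $\TC$-generating function of $S^{2n}_\Q$ and $\C P^n_\Q$ equals $P(z)/(1-z)^2$ with $P(1)=\cat(F)$ as claimed. I expect the main obstacle to be the second paragraph: pinning down that $\zcl_r[X\to B]$ really coincides with $\zcl_r(F)$ rather than merely being bounded by it, i.e.\ checking that tensoring with the free $H^*(B)$-factor neither raises nor lowers the nilpotency of the zero-divisor ideal. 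This should follow cleanly from the multiplicative splitting of the collapsed spectral sequence together with the fact that $\Delta_r^*$ restricted to the $H^*(B)$-summand is the identity, but it is the step that needs the TNCZ hypothesis in an essential way and deserves a careful argument.
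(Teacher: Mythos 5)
Your first step (TNCZ via Halperin's conjecture for $S^{2n}$ and $\C P^n$, then Theorem \ref{PTC nil ker} to get $\TC_r[X\to B]=\zcl_r[X\to B]$) matches the paper, which cites Shiga--Tezuka for exactly this. The gap is in your second paragraph: the claim $\zcl_r[X\to B]=\zcl_r(F)$ is false, and the reason is that TNCZ only gives a Leray--Hirsch splitting of $H^*(X^r_B)$ as an $H^*(B)$-\emph{module} on $H^*(F)^{\otimes r}$, not as a ring. Multiplicative extensions by base classes produce longer nontrivial products of zero-divisors than exist for $F^r$. Concretely, take the rationalized twistor fibration $S^2_\Q\to \C P^3_\Q\to S^4_\Q$ (relative model $\Lambda(\beta,\beta')\otimes\Lambda(x,x')$, $dx'=x^2-\beta$), which satisfies all hypotheses of the proposition. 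In $H^*(X^2_B)$ one has $x_i^2=\beta$, hence
\[
(x_1-x_2)^3 = 4\beta\,(x_1-x_2)\neq 0,
\]
since $H^*(X^2_B)$ is $H^*(S^4)$-free on $1,x_1,x_2,x_1x_2$; so $\zcl_2[X\to B]\ge 3>2=\zcl_2(S^2_\Q)=\TC_2(S^2_\Q)$. Thus your asserted identity $\mathcal{F}_p(z)=\mathcal{F}_{\mathrm{id}_F}(z)$ also fails in general, and the heuristic that ``the $H^*(B)$-factors multiply independently'' is precisely the step that cannot be repaired.

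The paper avoids computing $\zcl_r[X\to B]$ exactly. Instead it shows the increments are bounded below, $\cat(F)=\cupl(F)\le \zcl_{r+1}[X\to B]-\zcl_r[X\to B]=\TC_{r+1}[X\to B]-\TC_r[X\to B]$ (Lemma \ref{diff nil} plus Theorem \ref{PTC nil ker}), and pairs this with the \v{S}varc dimension bound $\TC_r[X\to B]\le \tfrac{1}{2}(r\dim F+\dim B+1)$, which grows linearly with slope exactly $\cat(F)$ for these two fibers. Together these force $\TC_{r+1}[X\to B]-\TC_r[X\to B]=\cat(F)$ for all large $r$, and \cite[Lemma 1]{FKS} then yields the form $P(z)/(1-z)^2$ with $P(1)=\cat(F)$; the bounded discrepancy between $\TC_r[X\to B]$ and $\TC_r(F)$ (as in the twistor example) is absorbed into $P$ without changing $P(1)$. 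To fix your argument you would need to replace the exact identification of $\zcl_r[X\to B]$ by this two-sided growth estimate.
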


We prepare a lemma to evaluate the growth of $\TC_{r}[X\to B]$, which is a generalization of \cite[Lemma 2]{FKS}. Let $\cupl(Y)$ denote the cuplength of a rational space $Y$, i.e., $\nil\,H^*(Y)$. It is well known that, for a formal $Y$, there is an identity $\cupl(Y) = \cat(Y)$.

\begin{lemma}
  \label{diff nil}
  Let $F\to X\to B$ be a rational fibration. Then there is an inequality
  \[
  \cupl(F) \le \zcl_{r+1}[X\to B]-\zcl_r[X\to B].
  \]
\end{lemma}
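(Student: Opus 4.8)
The key idea is to relate the $(r{+}1)$-th zero-divisors cup-length to the $r$-th one by "inserting one extra factor that captures the cohomology of $F$''. Concretely, I would work with the cohomological model of the fiberwise diagonal and compare the fiberwise products $X^{r}_B$ and $X^{r+1}_B$. First I would set $Z_r = \mathrm{Ker}\,(\Delta_r^*\colon H^*(X^r_B)\to H^*(X))$, so that $\zcl_r[X\to B]=\nil\,Z_r$. The plan is to produce, from a nonzero product of $\zcl_r[X\to B]$ elements of $Z_r$ inside $H^*(X^r_B)$, together with $\cupl(F)$ suitably chosen "zero-divisor'' classes built from the last coordinate, a nonzero product of $\zcl_r[X\to B]+\cupl(F)$ elements of $Z_{r+1}$ in $H^*(X^{r+1}_B)$; that immediately gives $\nil\,Z_{r+1}\ge \zcl_r[X\to B]+\cupl(F)$, which is the claim.

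**Key steps, in order.**
First I would fix the projections $p_\lambda\colon X^{r+1}_B\to X$ and the partial diagonal $\delta\colon X^{r+1}_B\to X^r_B$ forgetting the last coordinate, so that $\delta$ is (up to equivalence) a fiberwise product with $X\to B$ and pulls $Z_r$ back into $Z_{r+1}$; moreover $\delta^*$ is injective on the subalgebra it hits because $\delta$ admits the section $X^r_B\to X^{r+1}_B$ that repeats a coordinate. Thus a nonzero length-$\zcl_r[X\to B]$ product $\omega$ of elements of $Z_r$ gives a nonzero product $\delta^*\omega$ of elements of $Z_{r+1}$. Second, I would pick cohomology classes of $F$ realizing $\cupl(F)$: since $F\to X\xrightarrow{p}B$ and hence $F^{r+1}\to X^{r+1}_B\to B$ need not be TNCZ, I cannot simply lift them; instead I would use classes coming from $X$ itself. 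The natural candidates are "telescoped'' differences such as $p_{r}^*\alpha - p_{r+1}^*\alpha$ for $\alpha\in H^*(X)$, which visibly lie in $Z_{r+1}$ (they vanish under $\Delta_{r+1}^*$) and, crucially, involve only the last two coordinates, hence multiply freely against $\delta^*\omega$. Third, I would argue that one can choose $\alpha_1,\dots,\alpha_{\cupl(F)}\in H^*(X)$ whose images in $H^*(F)$ have nonzero product, and that then $\prod_i\bigl(p_{r}^*\alpha_i - p_{r+1}^*\alpha_i\bigr)$ is nonzero in $H^*(X^{r+1}_B)$ and "independent'' of $\delta^*\omega$ in the sense that the total product survives; this is most cleanly seen by restricting to the fiber $F^{r+1}$ over a point of $B$, where $X^{r+1}_B$ becomes $F^{r+1}$, the differences become genuine zero-divisors for the ordinary diagonal $F\to F^{r+1}$, and the Künneth theorem separates the $\omega$-part (living in the first $r$ factors) from the $\alpha$-part (living in the last two).

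**Main obstacle.**
The delicate point is the last step: showing the two products do not interfere, i.e. that $\delta^*\omega\cdot\prod_i(p_r^*\alpha_i-p_{r+1}^*\alpha_i)\ne 0$. When the fibration is TNCZ this is transparent via the collapsing Serre spectral sequence, but in general $H^*(X^{r+1}_B)$ is not a tensor product over $H^*(B)$ in any naive way, so I would instead restrict everything to a single fiber: choosing $b\in B$, the inclusion $F^{r+1}\hookrightarrow X^{r+1}_B$ sends $\delta^*\omega$ to the pullback of a nonzero class along $F^{r+1}\to F^r$ (using that $\omega$ itself restricts nontrivially to the fiber $F^r$ of $X^r_B\to B$ — which requires a small argument, e.g. that one may take $\omega$ to be a product of classes already detected in the fiber, or invoke that $\Delta_r^*$ restricted to fibers is the ordinary diagonal) and sends the $\alpha$-factors to honest zero-divisors of $\mathrm{diag}\colon F\to F^{r+1}$; then Künneth over $\Q$ finishes it. Handling the case where $\omega$ might not restrict nontrivially to the fiber is the real subtlety, and I would address it by replacing $\omega$ with a product of $\zcl_r[X\to B]$ zero-divisors chosen (as in the definition of $\zcl$) to be nonzero already after restriction to $F^r$, which is possible because the bottom of the filtration in the Serre spectral sequence for $F^r\to X^r_B\to B$ detects exactly the fiber cohomology.
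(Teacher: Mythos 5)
Your overall scheme is the same as the paper's: pull back a maximal nonzero product of $r$-level zero-divisors along the projection $X^{r+1}_B\to X^r_B$ (which has a section, so the pullback is injective), then multiply by $\cupl(F)$ new zero-divisors supported in two coordinates, and show the total product survives. However, two of your steps fail for a general rational fibration, which is the generality in which the lemma is stated. First, you need classes $\alpha_1,\dots,\alpha_{\cupl(F)}\in H^*(X)$ whose restrictions to $F$ have nonzero product; this forces the image of $H^*(X)\to H^*(F)$ to have cup-length at least $\cupl(F)$, which fails outside the TNCZ setting. In the paper's own Example \ref{ex not TNCZ} (model $\Lambda(x)\hookrightarrow\Lambda(x,y,z)$, $dz=xy$, fiber $(S^3\times S^5)_\Q$) one has $\cupl(F)=2$, but the image of $H^*(X)\to H^*(F)$ is spanned by $1$, $y$, $yz$ and has cup-length $1$: no class of $H^*(X)$ restricts to the degree-$5$ generator, so the required zero-divisors $p_r^*\alpha_i-p_{r+1}^*\alpha_i$ cannot be produced in the required number. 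Second, your non-interference step: the proposed fix of replacing $\omega$ by a product of $\zcl_r[X\to B]$ zero-divisors that restricts nontrivially to $F^r$ is in general impossible, because any such product restricts to a nonzero product of ordinary zero-divisors of $F$, so its length is bounded by the fiber's own $r$-th zero-divisor cup-length, which can be strictly smaller than $\zcl_r[X\to B]$; the fact that filtration zero of the Serre spectral sequence is built from fiber cohomology does not allow you to trade a nonzero product for a fiber-detected one of the same length. As written, your argument only yields $\zcl_{r+1}[X\to B]\ge \cupl(F)+(\text{fiber-detected part of }\zcl_r[X\to B])$, which is weaker than the statement.

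The paper's proof avoids both problems by staying at the level of the relative Sullivan model $\mathcal{M}\otimes(\Lambda V)^{\otimes(r+1)}$: the extra zero-divisors are differences of a single cocycle representative $b_i\in\Lambda V$ of a fiber class placed in two different tensor slots (no lifting to $H^*(X)$ is invoked), and the nonvanishing of the total product is shown not by restricting to a fiber but by applying the degree-selective dg-module map $id\otimes\psi$, with $\psi\colon\Lambda V\to\Q$ dual to $[b_1\cdots b_l]$, to the last slot; this collapses the big product onto $a_1\cdots a_k$, a representative of the given nonzero product for $X^r_B$, so no K\"unneth or fiber-restriction argument is needed. (Incidentally, the lifting difficulty you ran into is intrinsic to the problem: also in the model-level argument the slot-differences $\bar b_i$ must be cocycles, which requires a compatible choice of representatives and is automatic in the TNCZ situation where the paper applies the lemma.) To salvage your approach you would need either a TNCZ-type hypothesis or a cochain-level substitute for the classes $\alpha_i$ and for the detection step, which is exactly what the paper's $\psi$-argument supplies.
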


\begin{proof}
  Let $\mathcal{M} \hookrightarrow \mathcal{M}\otimes \Lambda V\to \Lambda V$ denote the relative minimal Sullivan model, and let $\zcl_r[X\to B]=k$ and $\cupl(F)=l$. We take representatives $a_1, \cdots, a_k \in \mathcal{M}\otimes (\Lambda V)^{\otimes r}$ and $b_1, \cdots, b_l \in \Lambda V$. Let $\varpi\colon X^{r+1}_B\to X^r_B$ denote the projection of the first $r$ coordinates, which is modelled by
  \[
    \mathcal{M}\otimes (\Lambda V)^{\otimes r} \to \mathcal{M}\otimes (\Lambda V)^{\otimes {r+1}},\quad a\mapsto \bar{a} := a\otimes1.
  \]
  Then $\bar{a}_i$ represents a nonzero element in $\mathrm{Ker}\, H^*(\Delta_{r+1})$. Now we set
  \[
  \bar{b}_i:= 1\otimes1\otimes\cdots\otimes b_i - 1\otimes b_i\otimes\cdots\otimes 1 \in \mathcal{M}\otimes (\Lambda V)^{\otimes {r+1}},
  \]
  which is a nontrivial cocycle and contained in $\mathrm{Ker}\,(id\otimes\mu)$.

  We will prove that $\bar{a}_1\cdots\bar{a}_k\bar{b}_1\cdots\bar{b}_l$ represents a nonzero element in $\mathrm{Ker}\, H^*(\Delta_{r+1})$, from which the statement follows. We consider a dg-module homomorphism $\psi\colon (\Lambda V)\to \Q$ which satisfies
  \[
  \psi(b_1\cdots b_l)=1,\quad \psi(b)=0 \quad\text{for}\quad |b|\neq|b_1\cdots b_l|.
  \]
  Then $id\otimes\psi\colon \mathcal{M}\otimes (\Lambda V)^{\otimes {r+1}}\to \mathcal{M}\otimes (\Lambda V)^{\otimes r}$ is dg-module homomorphism and
  \[
  (id\otimes\psi)(\bar{a}_1\cdots\bar{a}_k\bar{b}_1\cdots\bar{b}_l) = a_1\cdots a_k
  \]
  represents a nonzero element in $\mathrm{Ker}\, H^*(\Delta_{r})$ by the assumption. Thus the proof is finished.
\end{proof}

\begin{proof}[Proof of Proposition \ref{TC gen even}]
  By the result of Shiga and Tezuka \cite{ST}, we have that $S^{2n}$ and $\C P^n$ satisfy the Halperin conjecture. Then the rational fibration $F\to X\to B$ is TNCZ and, by Theorem \ref{PTC nil ker} and Lemma \ref{diff nil}, we have
  \[
  \cat(F)\le \zcl_{r+1}[X\to B]-\zcl_r[X\to B] = \TC_{r+1}[X\to B]-\TC_{r}[X\to B].
  \]
  Remark that $\cat(S^{2n}_\Q)=1$ and $\cat(\C P^n_\Q)=n$. On the other hand, by \cite[Theorem 5]{Sv}, the upper bound is evaluated by
  \begin{align*}
    \TC_r[E\to B]&\le \frac{r\dim F + \dim B + 1}{2} \\
    &=
    \begin{cases}
      r + \displaystyle\tfrac{\dim B + 1}{2} & (F = S^{2n}_\Q)\\
      rn + \displaystyle\tfrac{\dim B + 1}{2} & (F = \C P^n_\Q).
    \end{cases}
  \end{align*}
  \noindent
  Thus by \cite[Lemma 1]{FKS}, the statement is proved.
\end{proof}

\end{document}